\numberwithin{equation}{section}
\newcommand{\ndN}{\mathbb{N}}
\newcommand{\ndQ}{\mathbb{Q}}
\newcommand{\ndR}{\mathbb{R}}
\newcommand{\ndC}{\mathbb{C}}
\newcommand{\ndK}{\mathbb{K}}
\renewcommand{\Pr}[1]{\mathbb{P}\left(#1\right)}
\newcommand{\Ex}[1]{\mathbb{E}\left[#1\right]}
\newcommand{\one}{\mathbbm{1}}
\newcommand{\cA}{\mathcal{A}}
\newcommand{\cB}{\mathcal{B}}
\newcommand{\cC}{\mathcal{C}}
\newcommand{\cE}{\mathcal{E}}
\newcommand{\cF}{\mathcal{F}}
\newcommand{\cG}{\mathcal{G}}
\newcommand{\cH}{\mathcal{H}}
\newcommand{\cS}{\mathcal{S}}
\newcommand{\cT}{\mathcal{T}}
\newcommand{\cV}{\mathcal{V}}
\newcommand{\cX}{\mathcal{X}}
\newcommand{\mA}{\mathsf{A}}
\newcommand{\mB}{\mathsf{B}}
\newcommand{\mC}{\mathsf{C}}
\newcommand{\mK}{\mathsf{K}}
\newcommand{\mS}{\mathsf{S}}
\newcommand{\mT}{\mathsf{T}}
\newcommand{\mV}{\mathsf{V}}
\newcommand{\mX}{\mathsf{X}}
\newcommand{\me}{\mathsf{e}}
\newcommand{\CRT}{\mathcal{T}_\me}
\newcommand{\tF}{\tilde{\cF}}
\newcommand{\tG}{\tilde{\cG}}
\newcommand{\Sym}{\text{Sym}}
\newcommand{\RSym}{\text{RSym}}
\newcommand{\He}{\textsc{H}}
\newcommand{\Di}{\textsc{D}}
\newcommand{\he}{\text{h}}
\newcommand{\eqdist}{\,{\buildrel (d) \over =}\,}
\newcommand{\convdis}{\,{\buildrel d \over \longrightarrow}\,}
\newcommand{\convp}{\,{\buildrel p \over \longrightarrow}\,}
\newcommand \isom{
{\xrightarrow{\,\smash{\raisebox{-0.65ex}{\ensuremath{\scriptstyle\sim}}}\,}}
}
\newcommand{\disto}{\mathsf{dis}}
\newcommand{\Set}{\textsc{SET}}
\newtheorem{theorem}{Theorem}[section]
\newtheorem{corollary}[theorem]{Corollary}
\newtheorem{proposition}[theorem]{Proposition}
\newtheorem{lemma}[theorem]{Lemma}
\newtheorem{definition}[theorem]{Definition}
\newtheorem{remark}[theorem]{Remark}
\numberwithin{equation}{section}
\title{The continuum random tree is the scaling limit of unlabelled unrooted trees}
\date{}
\author{Benedikt Stufler}
\author{Benedikt Stufler\thanks{\'Ecole Normale Sup\'erieure de Lyon, E-mail: benedikt.stufler@ens-lyon.fr; The author is supported by the German Research Foundation DFG, STU 679/1-1}}
\begin{document}

	\maketitle

	\let\thefootnote\relax\footnotetext{ \\\emph{MSC2010 subject classifications}. Primary 60C05; secondary 05C80. \\
		\emph{Keywords and phrases.} unlabelled unrooted trees, continuum random tree, scaling limits}
	
	\vspace {-0.5cm}

\begin{abstract}
	We show that the uniform unlabelled unrooted tree with n vertices and vertex degrees in a fixed set converges in the Gromov--Hausdorff sense after a suitable rescaling to the Brownian continuum random tree. We also establish Benjamini--Schramm convergence of this model of random trees and provide a general approximation result, that allows for a transfer of a wide range of asymptotic properties of extremal and additive graph parameters from  P\'olya trees to unrooted trees.
\end{abstract}
\maketitle

\section{Introduction and main results}



Combinatorial trees are classical mathematical objects and crop up in variety of fields \cite{MR0025715,MR2484382,MR2718280}. In the present work we take a probabilistic approach to study unordered trees without labels. Here one distinguishes between P\'olya trees, which have a root, and unlabelled (unrooted) trees. It has been a long-standing conjecture by Aldous \cite[p. 55]{MR1166406}  that the continuum random tree (CRT) arises as scaling limit of these models of random trees. Marckert and Miermont \cite{MR2829313} treated the case of binary unordered rooted trees. The convergence of random (unrestricted) P\'olya trees was confirmed by Haas and Miermont \cite{MR3050512} using new methods, and an alternative proof has been given later by Panagiotou and Stufler \cite{2015arXiv150207180P}. As was also mentioned  in~\cite{MR3050512}, this does not settle the question regarding the convergence of random unlabelled unrooted trees. The main challenge for these structures the  complexity of their symmetries. Rooted trees have a simpler structure, as any automorphism is required to fix the root vertex. Our first main result confirms the CRT as scaling limit of unlabelled unrooted trees as their number of vertices becomes large, confirming Aldous conjecture for these structures. We take a unified approach to cover all (sensible) cases of vertex degree restrictions.

\begin{figure}[t]
	\centering
	\begin{minipage}{1\textwidth}
		\centering
		\includegraphics[width=0.30\textwidth]{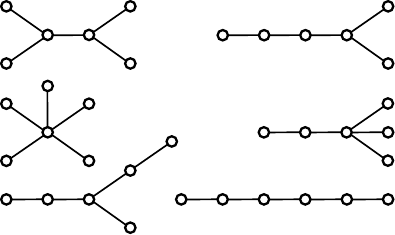}
		\caption{All unlabelled unrooted trees with $6$ vertices.}
		\label{fi:treelist}
	\end{minipage}
\end{figure}

Throughout, we let $\Omega$ denote a fixed set of positive integers containing $1$ and at least one integer equal or larger than $3$, and set $\Omega^* = \Omega - 1$. Let $\mT_n$ be drawn uniformly at random from the unlabelled trees with $n$ vertices and vertex-degrees in $\Omega$, and let $\mA_{n-1}$ denote the random P\'olya tree selected uniformly among all such trees with $n-1$ vertices and outdegrees in the shifted set $\Omega^*$. See Figure~\ref{fi:treelist} and \ref{fi:polya} for illustrations of these structures.

\begin{theorem}
	\label{SeFrte:main2}
	There is a constant $e_\Omega$ such that
	\begin{align}
		\label{eq:convunrooted}
		(\mT_n, e_\Omega n^{-1/2} d_{\mT_n})  \convdis (\CRT, d_{\CRT})
	\end{align}
	in the Gromov--Hausdorff sense, as $n \equiv 2 \mod \gcd(\Omega^*)$ becomes large. Moreover, there are constants $C,c>0$ such that the diameter $\Di(\mT_n)$ satisfies the tail bound
	\begin{align}
		\label{eq:tailbound}
		\Pr{\Di(\mT_n) \ge x} \le C \exp(-c x^2/n)
	\end{align}
	for all $n$ and $x \ge 0$.
\end{theorem}

The CRT plays a central role in the study of the geometric shape of large discrete structures. It crops up as scaling limit for a variety of models \cite{MR2438817,MR3335010,Ca,MR3382675,MR3342658,2014arXiv1411.1865P} and incited research in further directions \cite{2015arXiv150906616A,MR3399812}. Although scaling limits describe asymptotic global properties, they do not contain information on local properties, such as the limiting degree distribution of a randomly chosen vertex in a graph. Such asymptotic local properties of random rooted structures are described by Benjamini--Schramm limits \cite{MR2013797,MR3010812,MR1873300}.  Our second main result establishes Benjamini--Schramm convergence for random unlabelled unrooted trees toward an infinite limit tree. We take a unified approach to cover all sensible cases of vertex degree restrictions.

\begin{theorem}
	\label{te:locconv}
	The random unrooted tree $\mT_n$ converges in the Benjamini--Schramm sense toward an infinite rooted tree ${\mA_{\Omega^*}}$, as $n \equiv 2 \mod \gcd(\Omega^*)$ becomes large. Even stronger, if $v_n$ denotes a uniformly at random selected vertex of the tree $\mT_n$, then for each sequence $k_n = o(\sqrt{n})$ the radius $k_n$ graph neighbourhood $V_{k_n}(\cdot)$ satisfies
	\begin{align}
		\label{eq:fail}
		d_{\textsc{TV}}(V_{k_n}(\mT_n, v_n), V_{k_n}(\mA_{\Omega^*})) \to 0.
	\end{align}
\end{theorem}

Here $d_{\textsc{TV}}$ denotes the total variation distance. Note that this form of convergence is best possible, as \eqref{eq:fail} fails if the order of $k_n$ is comparable to $\sqrt{n}$. In the case $\Omega = \ndN$, Benjamini--Schramm convergence for $\mT_n$ was independently obtained by Georgakopoulos and Wagner  \cite{2015arXiv151203572G} using different techniques. 
Our methods for the proof of Theorems \ref{SeFrte:main2} and \ref{te:locconv} are based on the cycle pointing decomposition established recently by Bodirsky, Fusy, Kang and Vigerske \cite{MR2810913}. This novel and effective centering method differs fundamentally from classical approaches, such as the geometric center, and applies to arbitrary classes of combinatorial structures.  We use it to approximate the random unlabelled unrooted tree $\mT_n$ with $n$ vertices and vertex outdegrees in a set $\Omega$, by random P\'olya trees with vertex outdegrees in the shifted set $\Omega^* = \Omega -1$, whose random sizes concentrate around $n$. The approximation works not only for graph limits, but actually for a large range of additive and extremal graph parameters.

\begin{figure}[t]
	\centering
	\begin{minipage}{1\textwidth}
		\centering
		\includegraphics[width=0.4\textwidth]{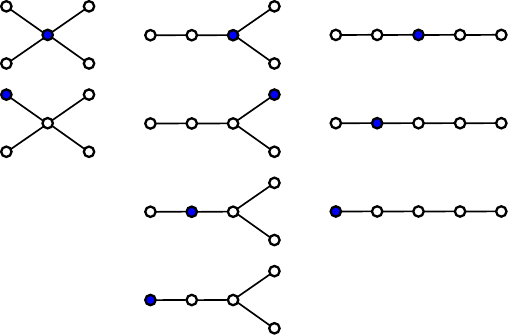}
		\caption{All P\'olya trees with $5$ vertices.}
		\label{fi:polya}
	\end{minipage}
\end{figure}

\begin{theorem}
	\label{te:appr}
 There are constants $C,c>0$, a random number $K_n = n + O_p(1) \le n$, and a coupling of the randomly sized P\'olya tree $\mA_{K_n}$ with a tree $\mB_n$  having stochastically bounded size $n - K_n+1$, such that the random tree $\bar{\mT}_n$ obtained by identifying the root vertices of $\mA_{K_n}$ and $\mB_n$ satisfies 
	\[
		d_{\textsc{TV}}(\mT_n, \bar{\mT}_n) \le C \exp(-cn)
	\]
	for all $n$.
\end{theorem}


Theorem~\ref{te:appr} establishes in full generality how a random unrooted tree may be approximated by a single large random rooted tree having  the property, that when conditioned on having a fixed size, it is uniformly distributed among all P\'olya trees with this size and the given vertex outdegree restrictions.  This has far reaching consequences and underline the advantages of this approach. It implies that for a very large set of graph theoretic properties (maximum degree, degree distribution, subtree counts, \ldots) everything known (present and future) about  random P\'olya trees also applies to random unlabelled unrooted trees, erasing the need to study uniform unrooted unordered trees directly. For example, Haas and Miermont~\cite[Thm. 9, Cor. 10]{MR3050512} established Gromov--Hausdorff--Prokhorov scaling limits for uniform unordered rooted trees endowed with the uniform measure on their leaves or on all their vertices, if the vertex out-degrees are restricted to a set of the form $\Omega^* = \ndN_0$, $\Omega^*=\{0,d\}$ or $\Omega^*=\{0, \ldots, d\}$ for some $d \ge 2$. Using this result, it follows easily from Theorem~\ref{te:appr} that  the uniform vertex degree restricted unrooted tree $\mT_n$ with vertex degrees in $\Omega = \Omega^* +1$ also converges in the Gromov--Hausdorff--Prokhorov sense, thus strengthening the convergence of Theorem~\ref{SeFrte:main2} for these cases. But again, it is not about for which cases of vertex-degree restrictions we may deduce convergence at the moment. The contribution of Theorem~\ref{te:appr} is that "practically all" properties of random unordered rooted trees get transferred automatically to the unrooted case, regardless of the extend to which they are understood at present. 

Thus, Theorem~\ref{te:appr} provides a rigorous justification of the empirically backed and widely believed fact that rooted and unrooted trees behave asymptotically similarly. Note that this does \emph{not} imply that almost all unrooted trees are asymmetric (meaning the absence of non-trivial symmetries) or possess as much possible root locations as vertices. Some discrete structures such as planar maps with half-edges as atoms have such properties, and hence a purely enumerative argument suffices to show that the asymptotic study of these objects  is equivalent to the study of half-edge rooted planar maps. The case of unordered trees is different, as the probability for the random tree $\mT_n$ to be asymmetric is bounded away from $1$, as is the probability for the event that rooting it at each of its $n$ vertices yields $n$ distinct trees. Moreover, the approximation argument of  Theorem~\ref{te:appr}  does not appear to work as well in the other direction. For example, the convergence of $\mT_n$ (in the local sense, or in the sense of scaling limits) may be used to obtain convergence of a random P\'olya-tree having a {\em random} number of vertices (depending on $n$), but, although this number concentrates, this is not sufficient to deduce convergence of a random P\'olya tree with a deterministic size  that becomes large. Hence the most economic approach is really to study P\'olya trees and then transfer the results to random unlabelled unrooted trees. Furthermore, in  \cite{2015arXiv150207180P} it was shown how asymptotic properties of conditioned Galton--Watson trees may be transferred to random P\'olya trees, which by the results of the present work hence also apply to the unrooted model. As Galton--Watson trees are without doubt the best understood model of random trees in probability theory, it is natural to pave the way for building on this knowledge.

In \cite{MR2810913} the cycle pointing method was developed for the enumeration and efficient sampling of discrete structures. The present work demonstrates for the important classical example of unlabelled  trees how a combination with a probabilistic approach allows us to answer a large number of questions related to the study of asymptotic properties of random discrete structures. Due to the generality of the involved methods this will likely stimulate probabilistic applications to further classes of discrete structures, such as models of random unlabelled graphs. 

\subsection{Combinatorial applications of the scaling limit}
A direct consequence of the scaling limit in Theorem~\ref{SeFrte:main2} is that the rescaled diameter $e_\Omega n^{-1/2} \Di(\mT_n)$ converges weakly and in arbitrarily high moments toward the diameter $\Di(\CRT)$ of the CRT. That is,  \[
\Pr{n^{-1/2} e_\Omega \Di(\mT_n) >  x} \to \Pr{\Di(\CRT) > x},
\]
and
\[
\Ex{\Di(\mT_n)^p } \sim e_\Omega^{-p} n^{p/2} \Ex{\Di(\CRT)^p}.
\]
The distribution of $\Di(\CRT)$ is known and given by
\begin{align}
\label{eq:diamcrt0}
\Di(\CRT) \eqdist \sup_{0 \le t_1 \le t_2 \le 1}(\me(t_1) + \me(t_2) - 2 \inf_{t_1 \le t \le t_2} \me(t)),
\end{align}
with $\me = (\me_t)_{0 \le t \le 1}$ denoting Brownian excursion of length $1$, and
\begin{align}
\label{eq:diamcrt}
\Pr{\Di(\CRT) > x} = \sum_{k=1}^\infty (k^2-1)\Big(\frac{2}{3}k^4x^4 -4k^2x^2 +2\Big)\exp(-k^2x^2/2).
\end{align}
Equations \eqref{eq:diamcrt0} and \eqref{eq:diamcrt}  have been established by  Aldous \cite[Ch. 3.4]{MR1166406} using convergence of random discrete trees. Expression~\eqref{eq:diamcrt} was recently recovered directly in the continuous setting by Wang~\cite{MR3434205}.  The moments of the diameter are given by:
\begin{align}
\label{eq:momdi1}
\Ex{\Di(\CRT)} &= \frac{4}{3}\sqrt{\pi/2}, \quad \Ex{\Di(\CRT)^2} = \frac{2}{3}\left(1 + \frac{\pi^2}{3}\right), \quad \Ex{\Di(\CRT)^3} = 2 \sqrt{2\pi}, \\
\label{eq:momdi2}
\Ex{\Di(\CRT)^k} &= \frac{2^{k/2}}{3} k(k-1)(k-3) \Gamma(k/2)(\zeta(k-2) - \zeta(k)) \quad \text{for $k \ge 4$}.
\end{align}
The expression $\Ex{\Di(\CRT)} = \frac{4}{3}\sqrt{\pi/2}$ may be obtained as shown in Aldous \cite[Sec. 3.4]{MR1166406} using results of Szekeres \cite{MR731595}, who proved the existence of a limit distribution for the diameter of rescaled random unordered labelled trees. The higher moments could be obtained in the same way by elaborated calculations, or, we can deduce them by combining Theorem~\ref{SeFrte:main2} with results by Broutin and Flajolet, who studied in \cite{MR2956055} the random tree $\tau_n$ that is drawn uniformly at random among all unlabelled trees with $n$ leaves in which each inner vertex is required to have degree $3$. Using analytic methods \cite[Thm. 8]{MR2956055}, they computed asymptotics of the form \[\Ex{\Di(\tau_n)^r} \sim c_r \lambda^{-r} n^{r/2}\] with $\lambda$ an analytically given constant, and the constants $c_r$ given by
\begin{align*}
c_1 &= \frac{8}{3} \sqrt{\pi}, \quad c_2 = \frac{16}{3}(1 + \frac{\pi^2}{3}), \quad c_3 = 64 \sqrt{\pi}, \\ 
c_r &= \frac{4^r}{3}r(r-1)(r-3)\Gamma(r/2)(\zeta(r-2) - \zeta(r)) \quad \text{if $r \ge 4$}.
\end{align*} 
As $\tau_n$ has $n$ leaves and hence $2n-1$ vertices in total, it follows by Theorem~\ref{SeFrte:main2} that
\[
(\tau_n, e_{\{0,2\}} (2n-1)^{-1/2} d_{\tau_n}) \convdis (\CRT, d_{\CRT})
\]
and consequently, by the exponential tail-bounds for the diameter in Theorem~\ref{SeFrte:main2}, which imply arbitrarily high uniform integrability,
\[
\Ex{\Di(\tau_n)^r} \sim \Ex{\Di(\CRT)^r} (e_{\{0,2\}} / \sqrt{2})^{-r}  n^{r/2}.
\] It follows that
\[
\Ex{\Di(\CRT)^r} = c_r (e_{\{0,2\}} / (\sqrt{2} \lambda))^{r}.\] All that remains is to calculate the ratio $e_{\{0,2\}} / (\sqrt{2} \lambda)$, which is given by \[e_{\{0,2\}} / (\sqrt{2} \lambda) = \Ex{\Di(\CRT)} / c_1 = 1/(2 \sqrt{2}),\] since $\Ex{\Di(\CRT)} = 4/3 \sqrt{\pi/2}$. This elegantly  yields Equations~\eqref{eq:momdi1} and \eqref{eq:momdi2}.

\subsection*{Outline of the paper}
In Section~\ref{sec:discretetrees} we fix basic notions on graphs and discrete trees. Section~\ref{sec:crt} gives a brief account on Gromov--Hausdorff convergence and the continuum random tree. Section~\ref{sec:local} recalls the notion of local weak convergence and results for random P\'olya trees. Section~\ref{sec:species} introduces the reader to the language of combinatorial species, and Section~\ref{sec:cycle} to the technique of cycle pointing that is formulated using these notions. Section~\ref{sec:Boltzmann} recalls the concept of (P\'olya-)Boltzmann samplers, which builds a bridge from  combinatorial structures to random algorithms that sample these structures. Section~\ref{sec:mult} recalls a result related to extremal component sizes in random multisets.  In Section \ref{sec:mainproof} we present the proofs of our main results.

\subsection*{Notation}
Throughout, we set
\[
\ndN=\{1,2,\ldots\}, \qquad \ndN_0 = \{0\} \cup \ndN, \qquad [n]=\{1,2,\ldots, n\}, \qquad n \in \ndN_0.
\]
we assume that all considered random variables are defined on a common probability space whose measure we denote by $\mathbb{P}$. All unspecified limits are taken as $n$ becomes large, possibly along a shifted sublattice of the integers.  We write $\convdis$ and $\convp$ for convergence in distribution and probability, and $\eqdist$ for equality in distribution. An event holds with high probability, if its probability tends to $1$ as $n \to \infty$.
We let $O_p(1)$ denote an unspecified random variable $X_n$ of a stochastically bounded sequence $(X_n)_n$. The total variation distance of measures and random variables is denoted by $d_{\textsc{TV}}$. For a sequence $a_n$ that is eventually positive the notation $O(a_n)$ and $o(a_n)$ refer to unspecified deterministic sequences that are bounded by a multiple of $a_n$ or whose order is negligible compared to $a_n$. Given a multi-variate power series $f(z_1, z_2, \ldots)$ we let $[z_1^{t_1} \cdots z_m^{t_m}]f(z_1, z_2, \ldots)$ denote the coefficient corresponding to the monomial $z_1^{t_1} \cdots z_m^{t_m}$.

\section{Discrete trees}
\label{sec:discretetrees}

\label{sec:cp1}
A {\em (labelled) graph} $G$ consists of a non-empty set $V(G)$ of {\em vertices} (or {\em labels}) and a set $E(G)$ of {\em edges} that are two-element subsets of $V(G)$. The cardinality $|V(G)|$ of the vertex set is termed the  {\em size} of $G$. Instead of $v \in V(G)$ we will often just write $v \in G$. Two vertices $v, w \in V(G)$ are said to be {\em adjacent} if $\{v,w\} \in E(G)$. An edge $e \in E(G)$ is adjacent to $v$ if $v \in e$. The cardinality of the set of all edges adjacent to a vertex $v$ is termed its {\em degree} and denoted by $d_G(v)$.  We say the graph $G$ is \emph{connected} if any two vertices $u, v \in V(G)$ are connected by a path in $G$. The length of a shortest path connecting the vertices $u$ and $v$ is called the {\em graph distance} of $u$ and $v$ and it is denoted by $d_G(u,v)$. Clearly $d_G$ is a metric on the vertex set $V(G)$. A graph~$G$ together with a distinguished vertex $v \in V(G)$ is called a {\em rooted} graph with root-vertex $v$. The {\em height} $\he(w)$ of a vertex $w \in V(G)$ is its distance from the root. The \emph{height} $\He(G)$ of the entire graph is the supremum of the heights  of the vertices in $G$. Two graphs $G_1$ and $G_2$ are termed {\em isomorphic}, if there is a bijection $\varphi: V(G_1) \to V(G_2)$ such that any two vertices $x,y \in V(G_1)$ are adjacent in $G_1$ if and only if $\phi(x)$ and $\phi(y)$ are adjacent in $G_2$. Any such bijection is termed an {\em isomorphism} between $G_1$ and $G_2$. Rooted graphs $G_1^\bullet = (G_1, o_1)$ and $G_2^\bullet=(G_2, o_2)$ are termed isomorphic, if there is a graph isomorphism $\phi$ from $G_1$ to $G_2$ that satisfies $\phi(o_1) = o_2$. An isomorphism class of (rooted) graphs is also called an {\em unlabelled (rooted) graph}. We will often not distinguish between such a class or any fixed representative of that class.

A {\em tree} $T$ is a non-empty connected graph without cyclic subgraphs, that is, we cannot walk from one vertex to itself without crossing at least one edge twice. Any two vertices of a tree are connected by a unique path. Figure~\ref{fi:treelist} depicts the list of all unlabelled trees with $6$ vertices.  If $T$ is rooted, then the vertices $w' \in V(T)$ that are adjacent to a vertex $w$ and have height $\he(w') = \he(w) + 1$ form the {\em offspring set} of the vertex $w$. Its cardinality is the {\em outdegree} $d^+_T(w)$ of the vertex $w$. Unlabelled rooted trees are also termed P\'olya trees.  Note that while any labelled tree with $n$ vertices admits $n$ different roots, this does not hold in the unlabelled setting. For example, as illustrated in Figure~\ref{fi:polya}, there are $3$ unlabelled trees with  $5$ vertices and each of them has a different number of rootings.

\section{Scaling limits}
\label{sec:crt}
We briefly recall several relevant results regarding the convergence of random rooted trees toward the continuum random tree.

\subsection{Gromov--Hausdorff convergence}
We introduce the required notions regarding the Gromov--Hausdorff convergence following Burago, Burago and Ivanov \cite[Ch. 7]{MR1835418} and Le Gall and Miermont \cite{MR3025391}

\subsubsection{The Hausdorff metric}
\label{sec:hausdorff}
Recall that given subsets $A$ and $B$ of a metric space $(X,d)$, their  {\em Hausdorff-distance} is given by
\[
d_{\text{H}}(A,B) = \inf\{\epsilon > 0 \mid A \subset U_{\epsilon}(B), B \subset U_{\epsilon}(A)\} \in [0, \infty],
\]
where $U_{\epsilon}(A) = \{x \in X \mid d(x,A) \le \epsilon\}$ denotes the {\em $\epsilon$-hull} of $A$. In general, the Hausdorff-distance does not define a metric on the set of all subsets of $X$, but it does on the set of all compact subsets of $X$ (\cite[Prop. 7.3.3]{MR1835418}).

\subsubsection{The Gromov--Hausdorff distance}

The Gromov--Hausdorff distance allows us to compare arbitrary metric spaces, instead of only subsets of a common metric space. It is defined by the infimum of Hausdorff-distances of isometric copies in a common metric space. We are also going to consider a variation of the Gromov--Hausdorff distance given in \cite{MR3025391} for {\em pointed} metric spaces, which are metric spaces together with a distinguished point.

Given metric spaces $(X,d_X)$, and $(Y,d_Y)$, and distinguished elements $x_0 \in X$ and $y_0 \in Y$, the Gromov--Hausdorff distances of $X$ and $Y$ and the pointed spaces $X^\bullet = (X,x_0)$ and $Y^\bullet = (Y,y_0)$ are defined by
\begin{align*}
d_{\text{GH}}(X,Y) &= \inf_{\iota_X, \iota_Y} d_{\text{H}}(\iota_X(X), \iota_Y(Y)) \in [0, \infty], \\
d_{\text{GH}}(X^\bullet,Y^\bullet) &= \inf_{\iota_X, \iota_Y} \max\left\{d_{\text{H}}(\iota_X(X), \iota_Y(Y)), d_E(\iota_X(x_0), \iota_Y(y_0))\right\} \in [0, \infty]
\end{align*}
where in both cases the infimum is taken over all isometric embeddings $\iota_X: X \to E$ and $\iota_Y: Y \to E$ into a common metric space $(E, d_E)$, compare with Figure~\ref{fi:gromov}.

\begin{figure}[ht]
	\centering
	\begin{minipage}{1.0\textwidth}
		\centering
		\includegraphics[width=0.4\textwidth]{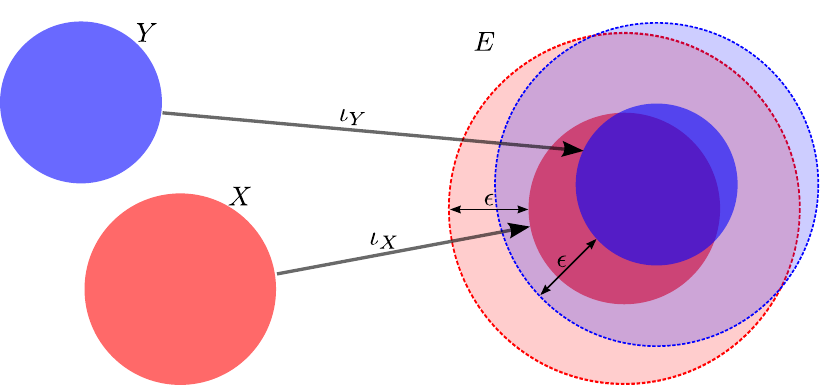}
		\caption{The Gromov--Hausdorff distance.}
		\label{fi:gromov}
	\end{minipage}
\end{figure}

We will make use of the following characterisation of the Gromov--Hausdorff metric. Given two metric spaces $(X, d_X)$ and $(Y, d_Y)$ a \emph{correspondence} between them is a relation $R \subset X \times Y$ such that any point $x \in X$ corresponds to at least one point $y \in Y$ and vice versa. If $X$ and $Y$ are pointed, we additionally require that the roots correspond to each other. The \emph{distortion} of $R$ is given by
\[
\disto(R) = \sup\{|d_X(x_1,x_2) - d_Y(y_1,y_2)| \mid (x_1, y_1),(x_2,y_2) \in R \}.
\]
\begin{proposition}[{\cite[Thm. 7.3.25]{MR1835418} and \cite[Prop.\ 3.6]{MR3025391}}]
	\label{pro:distortion}
	Given two metric spaces $X, Y$ and pointed metric spaces $X^\bullet, Y^\bullet$ we have that
	\begin{align*}
	d_{\text{GH}}(X,Y) = \frac{1}{2}\inf_R \disto(R), \quad \text{and} \quad
	d_{\text{GH}}(X^\bullet,Y^\bullet) = \frac{1}{2} \inf_R \disto(R),
	\end{align*}
	where $R$ ranges over all correspondences between $X$ and $Y$ (or $X^\bullet$ and $Y^\bullet$).
\end{proposition}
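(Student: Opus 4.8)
The plan is to establish the two inequalities $d_{\text{GH}}(X,Y) \le \frac12 \inf_R \disto(R)$ and $\frac12 \inf_R \disto(R) \le d_{\text{GH}}(X,Y)$ separately, and then to observe that each argument respects the extra constraints of the pointed setting (roots must correspond, and roots must be mapped close to each other). All quantities are allowed to take the value $+\infty$, so no boundedness hypothesis is needed; the statements for $X,Y$ and for $X^\bullet,Y^\bullet$ are proved in parallel.

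For the bound $\frac12 \inf_R \disto(R) \le d_{\text{GH}}(X,Y)$, I would start from arbitrary isometric embeddings $\iota_X : X \to E$ and $\iota_Y : Y \to E$ into a common metric space $(E,d_E)$ with $d_{\text{H}}(\iota_X(X), \iota_Y(Y)) < r$, and set $R = \{(x,y) \in X \times Y : d_E(\iota_X(x), \iota_Y(y)) < r\}$. Since the $r$-hull of $\iota_Y(Y)$ covers $\iota_X(X)$ and vice versa, $R$ is a correspondence; and for $(x_1,y_1),(x_2,y_2) \in R$ two applications of the triangle inequality in $E$ give $|d_X(x_1,x_2) - d_Y(y_1,y_2)| = |d_E(\iota_X(x_1),\iota_X(x_2)) - d_E(\iota_Y(y_1),\iota_Y(y_2))| < 2r$, so $\disto(R) \le 2r$. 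Letting $r$ decrease to $d_{\text{H}}(\iota_X(X),\iota_Y(Y))$ and then taking the infimum over embeddings yields the claim. In the pointed case one chooses the embeddings so that also $d_E(\iota_X(x_0),\iota_Y(y_0)) < r$; then $(x_0,y_0) \in R$ automatically, so $R$ is a correspondence of pointed spaces.

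For the reverse bound, I would fix a correspondence $R$ with $\disto(R) = 2r < \infty$ (otherwise there is nothing to prove) and, for each $\epsilon > 0$, equip the disjoint union $E = X \sqcup Y$ with the function that restricts to $d_X$ on $X$, to $d_Y$ on $Y$, and, for $x \in X$ and $y \in Y$, is given by
\[
d(x,y) \;=\; \inf_{(x',y') \in R}\big(d_X(x,x') + r + \epsilon + d_Y(y',y)\big),
\]
extended symmetrically. The one genuinely computational step is to check that $d$ is a metric on $E$: symmetry is immediate, positivity follows from $d(x,y) \ge r + \epsilon > 0$ for $x \in X$, $y \in Y$, and the triangle inequality splits into cases according to how the three points distribute between $X$ and $Y$; the cases where the two endpoints lie in the same part and the middle point in the other are the only nontrivial ones, and they use the distortion bound $|d_X(x',x'') - d_Y(y',y'')| \le 2r$ for pairs in $R$ together with the slack $2\epsilon$ provided by the two copies of $r+\epsilon$. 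The canonical inclusions $X \hookrightarrow E$ and $Y \hookrightarrow E$ are isometric, and since every $x \in X$ has a partner $y$ with $(x,y) \in R$, hence $d(x,y) \le r+\epsilon$ (and symmetrically), we get $d_{\text{H}}(X,Y) \le r+\epsilon$ inside $E$, so $d_{\text{GH}}(X,Y) \le r+\epsilon$; letting $\epsilon$ decrease to $0$ and then taking the infimum over $R$ gives $d_{\text{GH}}(X,Y) \le \frac12 \inf_R \disto(R)$. In the pointed case $(x_0,y_0) \in R$ forces $d(x_0,y_0) \le r+\epsilon$, so the maximum of $d_{\text{H}}(X,Y)$ and $d(x_0,y_0)$ in $E$ is at most $r+\epsilon$ as well.

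The only real obstacle is the metric verification in the second part, namely making sure that the mixed triangle inequalities hold and that $d$ does not degenerate; introducing the arbitrarily small parameter $\epsilon>0$, which costs nothing after passing to the infimum, is exactly what keeps this bookkeeping clean. Everything else is a direct unwinding of the definitions.
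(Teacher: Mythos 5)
Your proposal is correct and is precisely the standard argument for this characterisation (build a correspondence from nearby points of isometric copies in one direction; glue $X$ and $Y$ along $R$ with the $r+\epsilon$ cross-distance in the other), which is the proof given in the cited references — the paper itself states this proposition without proof. Both halves, including the metric verification on the disjoint union and the pointed refinements, check out.
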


Using this reformulation of the Gromov--Hausdorff distance, one may check that it satisfies the following properties.

\begin{lemma}[{\cite[Thm. 7.3.30]{MR1835418} and \cite[Thm.\ 3.5]{MR3025391}}]
	\label{le:axioms}
	Let $X$, $Y$, and $Z$ be (pointed) metric spaces. Then the following assertions hold. 
	\begin{enumerate}[\quad i)]
		\item $d_{\text{GH}}(X,Y) = 0$ if and only if $X$ and $Y$ are isometric.
		\item $d_{\text{GH}}(X,Z) \le d_{\text{GH}}(X,Y) + d_{\text{GH}}(Y,Z)$.
		\item If $X$ and $Y$ are bounded, then $d_{\text{GH}}(X,Y) < \infty$.
	\end{enumerate}
\end{lemma}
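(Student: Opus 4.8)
The plan is to derive all three assertions from the correspondence reformulation of $d_{\text{GH}}$ in Proposition~\ref{pro:distortion}, handling the pointed and unpointed cases simultaneously. Assertion iii) is immediate: if $X$ and $Y$ have finite diameters $D_X$ and $D_Y$, then $R = X \times Y$ is a correspondence between them (it contains the pair of base points in the pointed case), and for all $(x_1,y_1), (x_2,y_2) \in R$ we have $|d_X(x_1,x_2) - d_Y(y_1,y_2)| \le \max(d_X(x_1,x_2), d_Y(y_1,y_2)) \le \max(D_X, D_Y)$; hence $d_{\text{GH}}(X,Y) \le \tfrac12 \max(D_X, D_Y) < \infty$.

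For assertion ii) I would compose correspondences. Given correspondences $R_1 \subseteq X \times Y$ and $R_2 \subseteq Y \times Z$, put $R = \{(x,z) : \exists\, y \in Y \text{ with } (x,y) \in R_1 \text{ and } (y,z) \in R_2\}$. Since every $x \in X$ has a partner $y$ in $R_1$ and that $y$ has a partner $z$ in $R_2$, and symmetrically from the $Z$ side, $R$ is a correspondence between $X$ and $Z$, respecting base points whenever $R_1$ and $R_2$ do. For $(x_1,z_1), (x_2,z_2) \in R$ with witnesses $y_1, y_2 \in Y$, the triangle inequality for real numbers gives
\[
|d_X(x_1,x_2) - d_Z(z_1,z_2)| \le |d_X(x_1,x_2) - d_Y(y_1,y_2)| + |d_Y(y_1,y_2) - d_Z(z_1,z_2)| \le \disto(R_1) + \disto(R_2),
\]
so $\disto(R) \le \disto(R_1) + \disto(R_2)$; taking the infimum over $R_1$ and $R_2$ and applying Proposition~\ref{pro:distortion} yields ii).

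Assertion i) is the one requiring real work. One implication is trivial: an isometry $\varphi\colon X \to Y$ (mapping base point to base point in the pointed case) gives the zero-distortion correspondence $\{(x,\varphi(x)) : x \in X\}$, so $d_{\text{GH}}(X,Y) = 0$. For the converse, suppose $d_{\text{GH}}(X,Y) = 0$; here I use that $X$ and $Y$ are compact, which underlies the cited results and holds in all our applications. Choose correspondences $R_n$ with $\disto(R_n) \le 1/n$ and maps $f_n\colon X \to Y$ with $(x, f_n(x)) \in R_n$ for all $x$, arranging $f_n(x_0) = y_0$ in the pointed case. Then $|d_Y(f_n(x), f_n(x')) - d_X(x, x')| \le 1/n$ for all $x, x'$. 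Fix a countable dense subset $\{x_1, x_2, \dots\}$ of $X$ (possible since $X$ is compact, hence separable; include $x_0$ in the pointed case); by sequential compactness of $Y$ and a diagonal argument, pass to a subsequence along which $f_n(x_i)$ converges to some $\varphi(x_i) \in Y$ for every $i$. Then $d_Y(\varphi(x_i), \varphi(x_j)) = \lim_n d_Y(f_n(x_i), f_n(x_j)) = d_X(x_i, x_j)$, so $\varphi$ is distance-preserving on a dense set and extends uniquely to an isometric embedding $\varphi\colon X \to Y$ (with $\varphi(x_0) = y_0$). Running the same argument on the inverse correspondences $R_n^{-1}$ yields an isometric embedding $\psi\colon Y \to X$. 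Finally $\varphi \circ \psi\colon Y \to Y$ is an isometric self-embedding of the compact space $Y$, hence surjective (otherwise, writing $h = \varphi \circ \psi$ and picking $y \notin h(Y)$, the iterates $y, h(y), h^2(y), \dots$ would be pairwise at distance at least $d(y, h(Y)) > 0$, contradicting compactness); therefore $\varphi$ is onto, and thus an isometry.

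I expect the diagonal extraction in i) — together with the classical fact that an isometric self-embedding of a compact metric space is surjective — to be the only genuinely non-formal ingredient; assertions ii) and iii) are direct consequences of Proposition~\ref{pro:distortion}.
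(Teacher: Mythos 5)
Your proof is correct and follows exactly the route the paper indicates: all three assertions are derived from the correspondence characterisation of Proposition~\ref{pro:distortion}, using the composition of correspondences for the triangle inequality and the almost-isometry/diagonal-extraction argument for i), just as in the cited references. You are also right that i) genuinely requires compactness (which the lemma's statement omits but which holds in all uses here), and your treatment of it --- including the surjectivity of isometric self-embeddings of a compact space --- is sound.
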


\subsubsection{The space of isometry classes of compact metric spaces}

In Section~\ref{sec:hausdorff} we saw that the Hausdorff-distance defines a metric on the set of all compact subsets of a metric space. By Lemma~\ref{le:axioms} the Gromov--Hausdorff distance satisfies in a similar way the axioms of a (finite) pseudo-metric on the class of all compact metric spaces, and two metric spaces have Gromov--Hausdorff distance $0$ if and only if they are isometric. Informally speaking, this yields a metric on the collection of all isometry classes of metric spaces, and in a similar way we may endow the collection of isometry classes of pointed metric spaces with a metric.

Note that from a formal viewpoint this construction is a bit problematic, since we are forming a collection of proper classes (as opposed to sets). A solution is presented as an exercise in {\cite[Rem. 7.2.5]{MR1835418}}: 

\begin{proposition}
	Any set of pairwise non-isometric (pointed) metric spaces has cardinality at most $2^{\aleph_0}$, and there are specific examples of  $2^{\aleph_0}$ many non-isometric (pointed) spaces.
\end{proposition}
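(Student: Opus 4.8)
The statement is to be read, as in the surrounding discussion, for \emph{compact} metric spaces; for arbitrary metric spaces the first assertion fails, since the discrete spaces of all cardinalities already form a proper class of pairwise non-isometric examples. The same argument applies verbatim to complete separable metric spaces. The plan is to establish the two assertions separately.

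For the upper bound, the key point is that a compact metric space is recovered, up to isometry, from any of its countable dense subsets: if $X$ is compact then it is separable, hence has a countable dense subset $D$, and since $X$ is complete it is isometric to the metric completion of $(D, d_X|_{D\times D})$. As the completion is functorial — isometric metric spaces have isometric completions — the isometry class of $X$ depends only on the isometry class of the countable metric space $(D, d_X|_{D\times D})$. I would first dispose of the finite spaces (there are plainly at most $2^{\aleph_0}$ of them up to isometry, a finite metric space being determined by finitely many real numbers). For infinite $X$ I would fix an injective enumeration $D=\{x_1,x_2,\dots\}$ and encode $D$ by the array $\delta := (d_X(x_i,x_j))_{i,j\ge 1}\in[0,\infty)^{\ndN\times\ndN}$; then the isometry class of $X$ is a function of $\delta$. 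This produces a surjection from a subset of $[0,\infty)^{\ndN\times\ndN}$ onto the isometry classes of infinite compact metric spaces, and since $|[0,\infty)^{\ndN\times\ndN}| = (2^{\aleph_0})^{\aleph_0} = 2^{\aleph_0}$, the bound $\le 2^{\aleph_0}$ follows. For pointed compact spaces $(X,x_0)$ I would moreover arrange that $x_0 = x_1$ (adjoining $x_0$ to $D$ if necessary, which keeps $D$ countable and dense); if two arrays $\delta,\delta'$ obtained in this way coincide, then the identity on indices is a basepoint-preserving isometry between the corresponding countable dense subsets, which extends to a basepoint-preserving isometry of their completions, so $(X,x_0)$ is determined up to pointed isometry by $\delta$ and the same cardinality bound applies.

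For the lower bound it suffices to exhibit $2^{\aleph_0}$ pairwise non-isometric compact metric spaces. I would take the two-point spaces $X_r := \{0,r\}\subset\ndR$ with the Euclidean metric, one for each $r\in(0,1]$: each $X_r$ is compact, and $X_r$ is isometric to $X_{r'}$ only if $r=r'$, because the distance realised by a pair of distinct points is an isometry invariant. Since $|(0,1]| = 2^{\aleph_0}$ this is the required family, and pointing each $X_r$ at $0$ yields $2^{\aleph_0}$ pairwise non-isometric pointed spaces. (Closed intervals $[0,r]$ or circles of radius $r$ would do equally well.)

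I do not expect a genuine obstacle. The two points that need care are making explicit that the ambient class is that of compact (or complete separable) metric spaces — without such a restriction the first claim is false — and rendering the ``reconstruction from a countable dense subset'' step precise, where completeness of compact spaces and functoriality of the metric completion are used; the treatment of finite spaces and the bookkeeping in the pointed case are routine.
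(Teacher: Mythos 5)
Your proof is correct. The paper itself gives no argument for this proposition --- it only cites it as an exercise in Burago--Burago--Ivanov \cite[Rem.\ 7.2.5]{MR1835418} --- and your solution (reading the statement for compact spaces, encoding a compact space by the distance array of a countable dense subset and recovering it as the completion for the upper bound, and the continuum of two-point spaces for the lower bound) is exactly the standard intended argument, including the correct handling of the pointed case.
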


We may thus fix a representative of each isometry class of (pointed) metric spaces and let $\ndK$ (resp. $\ndK^\bullet$) denote the resulting sets of spaces. Lemma~\ref{le:axioms} now reads as follows.

\begin{corollary}[{\cite[Thm. 7.3.30]{MR1835418}}]
	The Gromov--Hausdorff distance defines a finite metric on the set $\ndK$ (resp. $\ndK^\bullet$) of representatives of isometry classes of (pointed) compact metric spaces.
\end{corollary}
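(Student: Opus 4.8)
The plan is to verify the axioms of a finite metric one at a time, obtaining each from Lemma~\ref{le:axioms} together with the single structural observation that $\ndK$ (resp.\ $\ndK^\bullet$) was constructed to contain exactly one representative of each isometry class of (pointed) compact metric spaces. So the proof is essentially a bookkeeping exercise that combines the preceding lemma with this construction.

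Concretely, I would argue as follows. Non-negativity is built into the definition, since $d_{\text{GH}}$ is an infimum of Hausdorff distances (or, via Proposition~\ref{pro:distortion}, half an infimum of distortions), and these are non-negative. Symmetry $d_{\text{GH}}(X,Y)=d_{\text{GH}}(Y,X)$ is immediate from the defining formula: interchanging the two isometric embeddings $\iota_X,\iota_Y$ into the common space $E$ leaves the expression unchanged, and the same is true in the pointed case. The triangle inequality is exactly item ii) of Lemma~\ref{le:axioms}. For finiteness I would note that every compact metric space is bounded — its diameter is the supremum of a continuous function on a compact set, hence finite — so item iii) of Lemma~\ref{le:axioms} yields $d_{\text{GH}}(X,Y)<\infty$ for all $X,Y\in\ndK$, and the identical remark covers $\ndK^\bullet$.

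For the separation axiom I would invoke item i) of Lemma~\ref{le:axioms}: $d_{\text{GH}}(X,Y)=0$ precisely when $X$ and $Y$ are isometric. Since distinct elements of $\ndK$ (resp.\ $\ndK^\bullet$) lie in distinct isometry classes by the choice of representatives, $d_{\text{GH}}(X,Y)=0$ forces $X=Y$, while $d_{\text{GH}}(X,X)=0$ is trivial. This completes the verification in both the plain and the pointed setting. There is no genuine obstacle; the only point that needs an argument beyond quoting Lemma~\ref{le:axioms} is the elementary fact that compactness implies boundedness, which is what guarantees the metric takes finite values.
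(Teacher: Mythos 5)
Your proof is correct and follows the same route as the paper, which presents the corollary as an immediate restatement of Lemma~\ref{le:axioms} once a representative of each isometry class has been fixed. Your verification of the individual axioms (including the observation that compactness gives boundedness and hence finiteness) just makes explicit what the paper leaves implicit.
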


The metric spaces $\ndK$ and $\ndK^\bullet$ have nice properties, which make them  suitable for studying random elements:

\begin{proposition}[{\cite[Thm.\ 3.5]{MR3025391} and \cite[Thm. 7.4.15]{MR1835418}}]
	The spaces $\ndK$ and $\ndK^\bullet$ are separable and complete, i.e. they are Polish spaces.
\end{proposition}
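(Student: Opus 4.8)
The plan is to prove separability and completeness separately; in each case the argument for the pointed space $\ndK^\bullet$ is a routine adaptation of the one for $\ndK$, obtained by always including the basepoint in the finite nets and by requiring correspondences to match basepoints.

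\emph{Separability.} I would take as a countable dense set the collection of (isometry classes of) finite metric spaces all of whose pairwise distances are rational. Given $(X,d) \in \ndK$ and $\epsilon>0$, compactness yields a finite $\epsilon$-net $\{x_1,\dots,x_n\}$; the relation $R=\{(x,x_i):d(x,x_i)\le\epsilon\}\cup\{(x_i,x_i)\}$ is a correspondence between $X$ and the subspace $\{x_1,\dots,x_n\}$ of distortion at most $2\epsilon$, so by Proposition~\ref{pro:distortion} this finite subspace lies within Gromov--Hausdorff distance $\epsilon$ of $X$. One then perturbs the finitely many values $d(x_i,x_j)$ to rationals $d'(x_i,x_j)\in(d(x_i,x_j)+\delta,\,d(x_i,x_j)+2\delta)$; the slack of size $2\delta$ makes $d'$ satisfy the triangle inequality, so $d'$ is a genuine metric differing from $d$ by less than $2\delta$ and hence within Gromov--Hausdorff distance $\delta$ of $d$. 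Combining the two approximations via the triangle inequality for $d_{\text{GH}}$ (Lemma~\ref{le:axioms}) and letting $\epsilon,\delta\to 0$ proves density.

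\emph{Completeness.} Since a Cauchy sequence converges once it has a convergent subsequence, I would pass to a subsequence $(X_k)_{k\ge1}$ with $d_{\text{GH}}(X_k,X_{k+1})<2^{-k}$. The heart of the matter is to realize all the $X_k$ as subspaces of one metric space $Z$ with $d_{\text{H}}^{Z}(X_k,X_{k+1})<2^{-k}$ for all $k$. To do this, set $Z_1=X_1$, and having built $Z_k$ containing an isometric copy of $X_k$, use Proposition~\ref{pro:distortion} to produce a metric space realizing $X_k$ and $X_{k+1}$ with mutual Hausdorff distance below $2^{-k}$, and glue it onto $Z_k$ along the common copy of $X_k$; the increasing union $Z=\bigcup_k Z_k$ then works, since gluing along a closed subspace preserves existing distances and introduces no shortcuts. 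Replacing $Z$ by its completion $\bar Z$, the $X_k$ become compact subsets of a complete space forming a Cauchy sequence in the Hausdorff metric (the tails $\sum_{j\ge k}2^{-j}$ being summably small), so they converge in $d_{\text{H}}^{\bar Z}$ to $K:=\{z\in\bar Z:\lim_k d(z,X_k)=0\}$. This $K$ is nonempty (an inductively chosen sequence $x_k\in X_k$ with $d(x_k,x_{k+1})<2^{-k}$ converges in $\bar Z$ to a point of $K$), closed, and totally bounded --- because $K\subset U_{2^{-k+1}}(X_k)$ and $X_k$ is compact --- hence compact, so its isometry class lies in $\ndK$. Finally $d_{\text{GH}}(X_k,K)\le d_{\text{H}}^{\bar Z}(X_k,K)\to0$, and in the pointed case the basepoints $o_k\in X_k$ converge in $\bar Z$ to a point $o\in K$ with $d_{\text{GH}}((X_k,o_k),(K,o))\le\max\{d_{\text{H}}^{\bar Z}(X_k,K),\,d^{\bar Z}(o_k,o)\}\to0$.

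The main obstacle is the construction of the common ambient space $Z$ in the completeness argument: one has to check that the iterated metric gluing really produces a metric space in which the $X_k$ sit isometrically with the prescribed small consecutive Hausdorff distances and in which no distance is shortened by later gluings (a path between two points of $X_k$ that strays into a later piece must re-enter through $X_{k+1}$, so it is no shorter than a path staying inside $Z_{k+1}$). Everything else is soft: it uses only that $d_{\text{GH}}$ is dominated by the Hausdorff distance inside a common space, that compactness is inherited by Hausdorff limits in a complete space, and the elementary approximation facts above.
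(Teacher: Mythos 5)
Your proof is correct and is essentially the standard argument for this result, which the paper does not prove itself but delegates to \cite[Thm.\ 3.5]{MR3025391} and \cite[Thm.\ 7.4.15]{MR1835418}: separability via finite spaces with rational distance matrices (using the correspondence bound and the $+\delta$ perturbation to preserve the triangle inequality), and completeness via successive metric gluings of a rapidly Cauchy subsequence into one ambient space, followed by Blaschke-type completeness of the Hausdorff metric on compact subsets of the completion. The pointed case is handled exactly as it should be, by tracking the basepoints through the construction.
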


\subsection{The continuum random tree}
An {\em $\ndR$-tree} is a metric space $(X,d)$ such that for any two points $x,y \in X$ the following properties hold 
\begin{enumerate}[\qquad 1.]
	\item There is a unique isometric map from the interval $\varphi_{x,y}: [0, d_f(x,y)] \to X$ satisfying $\varphi_{x,y}(0) = x$ and $\varphi_{x,y}(d_f(x,y)) = y$.
	\item If $q: [0, d_f(x,y)] \to X$ is a continuous injective map, then 
	\[
	q([0, d_f(x,y)]) = \varphi_{x,y}([0, d_f(x,y)]).
	\]
\end{enumerate}

$\ndR$-trees may be constructed as follows. Let $f: [0,1] \to [0, \infty[$ be a continuous function satisfying $f(0) = f(1) = 0$. Consider the pseudo-metric $d$ on the interval $[0,1]$ given by
\[
d(u,v) = f(u) + f(v) - 2 \inf_{u \le s \le v} f(s)
\]
for $u \le v$. Let $(\cT_f, d_{\cT_f)} = ([0,1]/ \mathord{\sim}, \bar{d})$ denote the corresponding quotient space. We may consider this space as rooted at the equivalence class $\bar{0}$ of $0$.

\begin{proposition}[{\cite[Thm.\ 3.1]{MR3025391}}]
	Given a continuous function $f: [0,1] \to [0, \infty[$ satisfying $f(0)=f(1)$ the corresponding metric space $\cT_f$ is a compact $\ndR$-tree.
\end{proposition}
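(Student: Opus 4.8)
The plan is to run the classical argument behind \cite[Thm.\ 3.1]{MR3025391}. First I would check that $d$, symmetrised via $d(u,v):=d(v,u)$ for $u>v$, is a pseudo-metric on $[0,1]$: non-negativity and $d(u,u)=0$ are immediate from $\inf_{u\le s\le v}f(s)\le\min(f(u),f(v))$, and the triangle inequality reduces — after a short case distinction on the order of the three points — to the case $u\le v\le w$, where the identity $\inf_{[u,w]}f=\min(\inf_{[u,v]}f,\inf_{[v,w]}f)$ turns the desired inequality into $\max(\inf_{[u,v]}f,\inf_{[v,w]}f)\le f(v)$, which is trivial. Hence $\bar d$ descends to a genuine metric on $\cT_f=[0,1]/\!\sim$. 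Next, since $d(u,v)=f(u)+f(v)-2\inf_{[u,v]}f$ and $f$ is uniformly continuous, $d(u,v)\to 0$ as $|u-v|\to 0$, so the canonical projection $p\colon[0,1]\to\cT_f$ is continuous; therefore $\cT_f=p([0,1])$ is a continuous image of a compact, path-connected space and so is itself compact (in particular complete and locally compact) and path-connected.

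The remaining work is to verify the two $\ndR$-tree axioms. I would invoke the standard characterisation that a compact, connected metric space satisfying the four-point condition $d(x,y)+d(z,w)\le\max\{d(x,z)+d(y,w),\,d(x,w)+d(y,z)\}$ is an $\ndR$-tree. Connectedness is already in hand, and the four-point condition for $\bar d$ follows from the corresponding inequality for $d$ on $[0,1]$: for $s_1\le s_2\le s_3\le s_4$, writing $m_{ij}=\inf_{[s_i,s_j]}f$, one expands the three pair-sums in terms of the $f(s_i)$ and the $m_{ij}$ and uses $m_{14}=\min(m_{12},m_{23},m_{34})=\min(m_{13},m_{24})$ together with $m_{23}\ge\max(m_{13},m_{24})$ to see that the middle pairing $d(s_1,s_3)+d(s_2,s_4)$ dominates the other two and in every subcase equals one of them; this is a finite, routine check. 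The uniqueness of the geodesic between two points and the fact that every injective continuous path between them has the same image then come for free, since a compact geodesic space satisfying the four-point condition is $0$-hyperbolic.

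An alternative, more self-contained route to the last point is to build the geodesics by hand: given $x=p(s)$ and $y=p(t)$ with $s\le t$, choose $r_\ast\in[s,t]$ realising $\inf_{[s,t]}f$ and apply the intermediate value theorem to the continuous non-increasing function $u\mapsto\inf_{[s,u]}f$ on $[s,r_\ast]$ (and symmetrically on $[r_\ast,t]$) to obtain a parametrisation $a\mapsto p(s_a)$ that one verifies is an isometry; concatenating the two halves produces an isometric embedding of $[0,d_f(x,y)]$ into $\cT_f$ with the correct endpoints, and again $0$-hyperbolicity gives uniqueness and the path-tracing property. Either way, the main obstacle is exactly this step: passing from "$\cT_f$ is a nice compact metric space" to "$\cT_f$ is an $\ndR$-tree" — one must either apply the four-point-condition characterisation with its hypotheses checked and then push through that inequality, or carry out the explicit geodesic construction, in which continuity of $f$ and the intermediate value theorem do the real work. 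The two preliminary steps are short and essentially forced.
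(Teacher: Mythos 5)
The paper does not prove this proposition --- it is quoted directly from Le Gall and Miermont \cite{MR3025391} --- and your argument is exactly the standard proof of that result: the pseudo-metric check via $\inf_{[u,w]}f=\min(\inf_{[u,v]}f,\inf_{[v,w]}f)$, compactness from continuity of the projection $p$, and the $\ndR$-tree property via the four-point condition for a complete connected space (or, equivalently, the explicit geodesic construction using the intermediate value theorem). Your outline is correct; the only minor understatement is that the triangle inequality for an ordered triple $u\le v\le w$ requires checking all three inequalities (not only $d(u,w)\le d(u,v)+d(v,w)$), though each follows by the same short computation.
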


Hence, this construction defines a map from a set of continuous functions to the space $\ndK^\bullet$. It can be seen to be Lipschitz-continuous:

\begin{proposition}[{\cite[Cor.\ 3.7]{MR3025391}}]
	The map 
	\[
	(\{f \in \cC([0,1], \ndR_{\ge 0}) \mid f(0)=f(1)=0 \}, \lVert \cdot \lVert_{\infty}) \to (\ndK^\bullet, d_{GH}), \qquad f \mapsto \cT_f
	\] is Lipschitz-continuous.
\end{proposition}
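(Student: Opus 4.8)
The plan is to bound the pointed Gromov--Hausdorff distance $d_{\text{GH}}(\cT_f, \cT_g)$ by a fixed constant multiple of $\lVert f - g \rVert_{\infty}$ for any two functions $f, g$ in the domain, by exhibiting a single explicit correspondence and appealing to the correspondence reformulation of Proposition~\ref{pro:distortion}. The natural candidate pairs the two canonical projections: writing $p_f \colon [0,1] \to \cT_f$ and $p_g \colon [0,1] \to \cT_g$ for the quotient maps onto the spaces $\cT_f$ and $\cT_g$, I would set
\[
R \;=\; \bigl\{ (p_f(t), p_g(t)) : t \in [0,1] \bigr\} \subseteq \cT_f \times \cT_g.
\]
Since $p_f$ and $p_g$ are surjective by construction, $R$ is a correspondence, and since $p_f(0)$ and $p_g(0)$ are exactly the roots $\bar 0$ of $\cT_f$ and $\cT_g$, the pair of roots lies in $R$; hence $R$ is also admissible for the \emph{pointed} distance.

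Next I would estimate $\disto(R)$. For $0 \le s \le t \le 1$ the definition of the quotient metric gives
\[
d_{\cT_f}(p_f(s), p_f(t)) = f(s) + f(t) - 2 \inf_{s \le u \le t} f(u),
\]
and the same identity with $g$ in place of $f$. Subtracting and using the triangle inequality, the bounds $|f(s) - g(s)|, |f(t) - g(t)| \le \lVert f - g \rVert_{\infty}$, and the elementary estimate $\bigl| \inf_{[s,t]} f - \inf_{[s,t]} g \bigr| \le \sup_{[s,t]} |f - g| \le \lVert f - g \rVert_{\infty}$, one obtains
\[
\bigl| d_{\cT_f}(p_f(s), p_f(t)) - d_{\cT_g}(p_g(s), p_g(t)) \bigr| \le 4 \lVert f - g \rVert_{\infty}.
\]
Taking the supremum over $s, t$ yields $\disto(R) \le 4 \lVert f - g \rVert_{\infty}$, so Proposition~\ref{pro:distortion} gives $d_{\text{GH}}(\cT_f, \cT_g) \le \tfrac12 \disto(R) \le 2 \lVert f - g \rVert_{\infty}$, and the map is $2$-Lipschitz.

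I do not expect a genuine obstacle here: the argument is essentially bookkeeping once the right correspondence is written down. The only step where the supremum norm does real work is the comparison of the infima of $f$ and $g$ over the sliding interval $[s, t]$; this is the point that breaks down for weaker norms, and it is why $\lVert \cdot \rVert_{\infty}$ is the correct topology on the source space. I would also take a moment to verify explicitly that $R$ — defined through the \emph{same} parameter $t$ on both coordinates — is genuinely a correspondence (not merely a relation) and that it contains the two roots, both of which are immediate from the construction of $\cT_f$ as a quotient of $[0,1]$.
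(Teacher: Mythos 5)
Your proof is correct and is exactly the standard argument for this result (which the paper only cites, from Le Gall--Miermont): the correspondence $R=\{(p_f(t),p_g(t)):t\in[0,1]\}$ together with the distortion bound $\disto(R)\le 4\lVert f-g\rVert_\infty$ via Proposition~\ref{pro:distortion}. All the details you flag — surjectivity of the quotient maps, the roots $p_f(0)=p_g(0)=\bar 0$ lying in $R$, and the comparison of infima over $[s,t]$ — are handled correctly, so the map is $2$-Lipschitz as claimed.
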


Hence we may define the continuum random tree as a random element of the polish space~$\ndK^\bullet$.

\begin{definition}
	The random pointed metric space $(\CRT, d_{\CRT}, \bar{0})$ coded by the Brownian excursion of duration one $\me=(\me_t)_{0 \le t \le 1}$ is called the Brownian continuum random tree (CRT).
\end{definition}

Note that the Lipschitz-continuity (and hence measurability) of the above map ensures that the CRT is a random variable.

\subsection{Scaling limits of random P\'olya trees}

It is  known that for any subset $\Omega^* \subset \ndN_0$ containing zero and at least one integer $k \ge 2$, the P\'olya tree $\mA_n$ drawn uniformly at random from the set of all P\'olya trees with $n$ vertices and vertex outdegrees in the set $\Omega^*$ admits the CRT as scaling limit. That is, there is a constant $c_{\Omega^*}$ satisfying
\begin{align}
\label{eq:convpolya}
(\mA_n, c_{\Omega^*} n^{-1/2} d_{\mA_n}) \convdis (\CRT, d_{\CRT})
\end{align}
as random elements of the space $\ndK^\bullet$. This has been shown by Marckert and Miermont for the case $\Omega^*=\{0,2\}$ in \cite{MR2829313}. Using different techniques that built on general results for Markov branching trees, Haas and Miermont \cite{MR3050512} extend this result to the cases $\Omega^*=\{0,d\}$ for all $d \ge 2$, and $\Omega^* = \ndN_0$. A unified approach for all sensible vertex outdegree restrictions (that is, requiring only $0 \in \Omega^*$ and $k \in \Omega^*$ for at least one $k \ge 2$) was taken in \cite{2015arXiv150207180P}, using combinatorial techniques and obtaining  tail bounds for the diameter $\Di(\mA_{n})$ of the form
\begin{align}
\label{SeFrle:tailrooted}
\Pr{\Di(\mA_n) \ge x} \le C \exp(-c x^2)
\end{align}
for all $x \ge 0$.

\section{Local weak limits}
\label{sec:local}

We briefly recall relevant notions and results regarding the local convergence of random rooted trees.

\subsection{The metric for local convergence}
Given two rooted, locally finite (that is, the graph may have infinitely many vertices, but each vertex has only finitely many neighbours) connected graphs $G^\bullet = (G, o_G)$ and $
H^\bullet = (H, o_H)$, we may consider the distance
\[
d_{\text{BS}}(G^\bullet, H^\bullet) =  2^{-\sup \{k \in \ndN_0 \,\mid\, V_k(G^\bullet) \simeq V_k(H^\bullet) \}}
\]
with $V_k(G^\bullet)$ denoting the subgraph of $G$ induced by all vertices with graph-distance at most $k$ from the root-vertex $o_G$. Here $V_k(G^\bullet) \simeq V_k(H^\bullet)$ denotes isomorphism of rooted graphs, that is, the existence of a graph isomorphism $\phi: V_k(G^\bullet) \to V_k(H^\bullet)$ satisfying $\phi(o_G) = o_H$. This defines a premetric on the collection of all rooted connected locally finite graphs. 

If $\mathbb{B}$ denotes the collection of isomorphism classes of rooted locally finite connected graphs ("unlabelled rooted graphs"), then the (lift of) this distance defines a metric on $\mathbb{B}$ which is complete and separable, i.e. $(\mathbb{B}, d_{\text{BS}})$ is a Polish space. Similarly as for the Gromov--Hausdorff metric, we may safely ignore the fact that $\mathbb{B}$ is a collection of proper classes (as opposed to sets). In order to precise, we would only need to fix a representatives of each isomorphism class and work with the set of these representatives instead.

\subsection{Benjamini--Schramm convergence of random P\'olya trees}

Let $\Omega \subset \ndN$ denote a subset containing $1$ and at least one integer $k \ge 3$, and let $\Omega^* = \Omega -1$ denote the shifted set. Let $\mA_n$ denote the random tree drawn uniformly at random from the set of all P\'olya trees with $n$ vertices and vertex outdegrees in $\Omega^*$. Let  $u_n$ denote a uniformly at random drawn selected vertex of $\mA_n$. It was shown in \cite[Thm. 6.22]{2015arXiv150402006S}, that there is a random infinite rooted trees $\mA_{\Omega^*}$ such that for each sequence $k_n = o(\sqrt{n})$ the random vertex $u_n$ has with high probability height strictly larger than $k_n$ in the tree $\mA_n$ and
\begin{align}
\label{eq:localconv}
d_{\textsc{TV}}( V_{k_n}(\mA_n, u_n), V_{k_n}(\mA_{\Omega^*})) \to 0.
\end{align}

\section{Combinatorial species of structures}
\label{sec:species}
Combinatorial species were developed by Joyal \cite{MR633783} and allow for a systematic study of a wide range of combinatorial objects. We are going to make heavy use of this framework and recall the required theory and notation following Bergeron, Labelle and Leroux \cite{MR1629341} and Joyal \cite{MR633783}. The language of {\em combinatorial classes} used in the book on analytic combinatorics by Flajolet and Sedgewick~\cite{MR2483235} is essentially equivalent in many aspects, although less emphasis is put on studying objects up to symmetry. 

\subsection{Combinatorial species of structures}
\label{sec:cp3}
A {\em combinatorial species} may be defined as a functor $\cF$ that maps any finite set $U$ of {\em labels} to a finite set $\cF[U]$ of {\em $\cF$-objects} and any bijection $\sigma: U \to V$ of finite sets to its (bijective) {\em transport function} $\cF[\sigma]:\cF[U]\to\cF[V]$ {\em along} $\sigma$, such that composition of maps and the identity maps are preserved. Formally, a species is a functor from the groupoid of finite sets and bijections to the category of finite sets and arbitrary maps. 
We say that a species $\cG$ is a {\em subspecies} of $\cF$, and write $\cG \subset \cF$, if $\cG[U] \subset \cF[U]$ for all finite sets $U$ and $\cG[\sigma] = \cF[\sigma]|_{U}$ for all bijections $\sigma: U \to V$.
Given two species $\cF$ and $\cG$, an {\em isomorphism} $\alpha: \cF \, \isom\,  \cG$ from $\cF$ to $\cG$ is a family of bijections $\alpha = (\alpha_U: \cF[U] \to \cG[U])_{U}$ where $U$ ranges over all finite sets, such that for all bijective maps $\sigma: U \to V$ the following diagram commutes. 
\[
\xymatrix{ \cF[U] \ar[d]^{\alpha_U} \ar[r]^{\cF[\sigma]} &\cF[V]\ar[d]^{\alpha_V}\\
	\cG[U] \ar[r]^{\cG[\sigma]} 		    &\cG[V]}
\]
In other words, $\alpha$ is a natural isomorphism between these functors. The species $\cF$ and $\cG$ are {\em isomorphic} if there exists and isomorphism from one to the other. This is denoted by $\cF \simeq \cG$. 

An element $F_U \in \cF[U]$ has size $|F_U| := |U|$ and two $\cF$-objects $F_U$ and $F_V$ are termed {\em isomorphic} if there is a bijection $\sigma: U \to V$ such that $\cF[\sigma](F_U) = F_V$. We will often just write $\sigma.F_U = F_V$ instead, if there is no risk of confusion.  We say $\sigma$ is an {\em isomorphism} from $F_U$ to $F_V$. If $U=V$ and $F_U = F_V$ then $\sigma$ is an {\em automorphism} of $F_U$. An isomorphism class of $\cF$-structures is called an \emph{unlabelled} $\cF$-object or an {\em isomorphism type}. By abuse of notation, we treat unlabelled objects as if they were regular objects. We will also just write $F \in \cF$ to state that $F$ is an $\cF$-object.

We will mostly be interested in the species of labelled trees. Moreover, we will make use of standard species such as the $\Set$-species given by $\Set[U] = \{U\}$ for all $U$. Moreover, we let $\cX$ the species with a single object of size $1$.
\subsection{Symmetries and generating power series}
Letting $\tilde{f}_n$ denote the number of unlabelled $\cF$-objects of size $n$, the {\em ordinary generating series} of $\cF$ is defined by
\[
\tilde{\cF}(x) = \sum_{n=0}^\infty \tilde{f}_n x^n
\]
A pair $(F, \sigma)$ of an $\cF$-object together with an automorphism is called a {\em symmetry}. Its {\em weight monomial} is given by \[
w_{(F, \sigma)} = \frac{1}{n!} x_1 ^{\sigma_1} x_2^{\sigma_2} \cdots x_n^{\sigma_n} \in \ndQ[[x_1, x_2, \ldots]]
\] with $n$ denoting the size of $F$ and $\sigma_i$ denoting the number of $i$-cycles of the permutation $\sigma$. In particular $\sigma_1$ denotes the number of fixpoints. We may form the species $\Sym(\cF)$ of symmetries of $\cF$. The {\em cycle index sum} of $\cF$ is given by 
\[
Z_\cF = \sum_{(F, \sigma)} w_{(F, \sigma)}
\] with the sum index $(F, \sigma)$ ranging over the set $\bigcup_{n \in \ndN_0} \Sym(\cF)[n]$. The reason for studying cycle index sums is the following remarkable property.
\begin{lemma}[{\cite[Sec. 3]{MR633783}}]
	\label{le:oneton}
	Let $U$ be a finite $n$-element set. For any unlabelled $\cF$-object $m$ of size $n$ there are precisely $n!$ symmetries $(F, \sigma)\in \Sym(\cF)[U]$ having the property that $F$ has isomorphism type $m$.
\end{lemma}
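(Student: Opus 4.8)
The plan is to read the statement as an orbit--stabiliser count for the natural action of the symmetric group on $\cF[U]$. Fix the $n$-element set $U$ and let $S_U$ denote the group of all bijections $U \to U$. Since $\cF$ is a functor preserving composition and identities, the assignment $\sigma \mapsto \cF[\sigma]$ makes $\cF[U]$ into an $S_U$-set. By the definitions given above, two objects $F, F' \in \cF[U]$ are isomorphic exactly when they lie in the same $S_U$-orbit, and for a fixed $F \in \cF[U]$ the set of its automorphisms is precisely the stabiliser $\Aut(F) = \{\sigma \in S_U : \cF[\sigma](F) = F\}$. An unlabelled $\cF$-object $m$ of size $n$ has a representative $F_W \in \cF[W]$ on some $n$-element set $W$, and transporting it along any bijection $W \to U$ shows that the set $O_m \subseteq \cF[U]$ of all $F \in \cF[U]$ of isomorphism type $m$ is a non-empty $S_U$-orbit.

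With this setup the count is immediate to reformulate: a symmetry $(F,\sigma) \in \Sym(\cF)[U]$ for which $F$ has type $m$ is exactly a pair with $F \in O_m$ and $\sigma \in \Aut(F)$, so the quantity to be determined is
\[
N \;=\; \sum_{F \in O_m} |\Aut(F)|.
\]
For $F$ and $F' = \tau . F$ in the same orbit one has $\Aut(F') = \tau\, \Aut(F)\, \tau^{-1}$, so all stabilisers of elements of $O_m$ have a common cardinality $a := |\Aut(F_0)|$ for any fixed $F_0 \in O_m$. The orbit--stabiliser theorem gives $|O_m| \cdot a = |S_U| = n!$, and hence $N = |O_m| \cdot a = n!$, which is the claim.

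\textbf{Where the work lies.} There is no substantial obstacle; the proof is a clean application of orbit--stabiliser once the objects are identified correctly. The only points requiring a moment's care are that the functoriality of $\cF$ genuinely yields a group action (so that orbit--stabiliser is applicable, and so that stabilisers within one orbit are conjugate), and that the abstract isomorphism type $m$ of size $n$ is realised on the \emph{specific} label set $U$ — both of which are handled by transporting a representative along a bijection onto $U$, as indicated above.
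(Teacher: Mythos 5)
Your proof is correct and is precisely the argument the paper has in mind: the text states only that the lemma ``is an application of basic facts of operations of groups on finite sets,'' and your orbit--stabiliser computation $\sum_{F \in O_m} |\Aut(F)| = |O_m|\cdot|\Aut(F_0)| = |S_U| = n!$ is exactly that application, carried out carefully.
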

 From a probabilistic viewpoint, this observation guarantees that the isomorphism type of the first coordinate of a uniformly at random drawn element from $\Sym(\cF)([n])$ is uniformly distributed among all $n$-element unlabelled $\cF$-objects. 
Lemma~\ref{le:oneton} implies that the ordinary generating series and the cycle index sum are related by 
\[
\tilde{\cF}(z) = Z_{\cF}(z, z^2, z^3, \ldots).
\]
See also {\cite[Sec. 3, Prop. 9]{MR633783}}.

The cycle index sum $Z_\Set$ is easily calculated: For any integer $n \ge 0$ let $\cS_n$ denote the symmetric group of order $n$. Then
\begin{align}
\label{eq:set}
Z_{\Set} = \sum_{n=0}^\infty \frac{1}{n!}\sum_{\sigma \in \cS_n} x_1^{\sigma_1} x_2^{\sigma_2} \cdots x_n^{\sigma_n}.
\end{align}
For any permutation $\sigma$ let $(\sigma_1, \sigma_2,  \ldots) \in \ndN_0^{(\ndN)}$ denote its {\em cycle type}. Then to each element $m=(m_i)_i \in \ndN_0^{(\ndN)}$ correspond only permutations of order $n := \sum_{i=1}^{\infty} i m_i$ and their number is given by $n! / \prod_{i=1}^{\infty} ( m_i! \, i^{m_i})$.  Hence we have
\begin{align*}
Z_{\Set}  = \sum_{m \in \ndN_0^{(\ndN)}} \prod_{i=1}^{\infty}  \frac{x_i^{m_i}} { m_i! \, i^{m_i}} = \prod_{i=1}^\infty \sum_{m_i=0}^\infty  \frac{x_i^{m_i}} { m_i! \, i^{m_i}} = \prod_{i=1}^\infty \exp \left(\frac{x_i} {i} \right) = \exp\left( \sum_{i=1}^\infty \frac{x_i}{i}\right).
\end{align*}
If $(x_i)_i$ would denote a sequence of sufficiently fast decaying positive real-numbers, then this calculation could easily be justified. But they denote a countable set of formal variables, and hence one has every right to ask for a rigorous justification of this argument, in particular why the involved infinite products of formal variables vanish. A correct formalization is to define a topology on the set of power series and interpret these infinite products as actual limits with respect to this topology. We refer the inclined reader to \cite[Appendix A.5]{MR2483235} for an adequate discussion of these questions.

\subsection{Operations on combinatorial species}
\label{sec:cp4}
The framework of combinatorial species offers a large variety of constructions that create new species from others. In the following let $\cF$, $(\cF_i)_{i \in I}$ and $\cG$ denote species and $U$ an arbitrary finite set.
The {\em sum} $\sum_{i \in I} \cF_i$ is defined by the disjoint union
\[
(\sum\nolimits_i \cF_i)[U] = \bigsqcup\nolimits_i \cF_i[U]
\]
if the right hand side is finite for all finite sets $U$. The {\em product} $\cF \cdot \cG$ is defined by the disjoint union
\[
(\cF \cdot \cG)[U] = \bigsqcup_{{\substack{ (U_1, U_2) \\ U_1 \cap U_2 = \emptyset, U_1 \cup U_2 = U}}} \cF[U_1] \times \cG[U_2] \\
\]
with componentwise transport. Thus, $n$-sized objects of the product are pairs of $\cF$-objects and $\cG$-objects whose sizes add up to $n$. If the species $\cG$ has no objects of size zero, we can form the {\em substitution} $\cF \circ \cG$ by
\[
(\cF \circ \cG)[U] = \bigsqcup_{{\substack{ \pi \text{ partition of } U}}} \cF[\pi] \times \prod_{Q \in \pi} \cG[Q]. \\
\]
An object of the substitution may be interpreted as an $\cF$-object whose labels are substituted by $\cG$-objects. The transport along a bijection $\sigma$ is defined by applying the induced map \[\overline{\sigma}:\, \pi \to \{\sigma(Q) \mid Q \in \pi\}, \quad Q \mapsto \sigma(Q)\] of partitions to the $\cF$-object, and the restricted maps $\sigma|_Q$ with $Q \in \pi$ to their corresponding $\cG$-objects. We will often write $\cF(\cG)$ instead of $\cF \circ \cG$. 
Explicit formulas for the generating series and cycle index sums of the discussed constructions are summarized in Table~\ref{tb:egs}. 

{
	\small
	\renewcommand{\arraystretch}{1.2}
	\setlength{\extrarowheight}{2.5 pt}
	\begin{table}
		\begin{center}
			\begin{tabular}{  | l | l | l |}
				\hline

				 & OGF & Cycle index sum \\ \hline
				$\sum_i \cF_i$  & $\sum_i \tilde{\cF}_i(x)$ & $\sum_i Z_{\cF_i}(x_1, x_2, \ldots)$\\
				$\cF \cdot \cG$  & $\tF(x) \tG(x)$ & $Z_\cF(x_1, x_2, \ldots) Z_\cG(x_1, x_2, \ldots)$ \\
				$\cF \circ \cG$  & $Z_\cF(\tG(x), \tG(x^2), \ldots)$ & $Z_\cF(Z_\cG(x_1, x_2, \ldots), Z_\cG(x_2, x_4, \ldots), \ldots)$ \\

				\hline
			\end{tabular}
		\end{center}
		\caption{Relation between combinatorial constructions and generating series.}
		\label{tb:egs}
	\end{table}
}

\subsection{Decomposition of symmetries of the substitution operation}
\label{sec:op1}
\label{sec:symmetry}
\label{SeFrsec:oponsp}
We are going to need a basic understanding of the structure of the symmetries of the composition $\cF \circ \cG$. The following is a summary of a standard decomposition given in \cite[Sec. 2.6.2]{MR2810913}, \cite[Section 3]{MR633783} and \cite[Section 4.3]{MR1629341}. Let $U$ be a finite set. Any element of $\Sym(\cF \circ \cG)[U]$ consists of the following objects: a partition $\pi$ of the set $U$, an $\cF$-structure $F \in \cF[\pi]$, a family of $\cG$-structures $(G_Q)_{Q \in \pi}$ with $G_Q \in \cG[Q]$ and a permutation $\sigma: U \to U$. We require the permutation $\sigma$ to permute the partition classes and induce an automorphism $\bar{\sigma}: \pi \to \pi$ of the $\cF$-object $F$. Moreover, for any partition class $Q \in \pi$ we require that the restriction $\sigma|_Q: Q \to \sigma(Q)$ is an isomorphism from $G_Q$ to $G_{\sigma(Q)}$. For any cycle $\bar{\tau} = (Q_1, \ldots, Q_\ell)$ of $\bar{\sigma}$ it follows that for all $i$ we have $\sigma^\ell(Q_i) = Q_i$ and the restriction $\sigma^\ell|_{Q_i}: Q_i \to Q_i$ is an automorphism of $G_{Q_i}$. 

Conversely, if we know $(G_{Q_1}, \sigma^\ell|_{Q_1})$ and the maps $\sigma|_{Q_i} = (\sigma|_{Q_1})^i$ for $1 \le i \le \ell-1$, we can reconstruct the $\cG$-objects $G_{Q_2}, \ldots, G_{Q_\ell}$ and the restriction $\sigma|_{Q_1 \cup \ldots \cup Q_\ell}$. Here any $k$-cycle $(a_1, \ldots, a_k)$ of the permutation $\sigma^\ell|_{Q_1}$ corresponds to the $k\ell$-cycle \[(a_1, \sigma(a_1), \ldots, \sigma^{\ell-1}(a_1), a_2, \sigma(a_2), \ldots, \sigma^{\ell-1}(a_2), \ldots, a_k, \sigma(a_k), \ldots, \sigma^{\ell-1}(a_k))\] of $\sigma|_{Q_1 \cup \ldots \cup Q_\ell}$. Thus any cycle $\nu$ of $\sigma$ corresponds to a cycle of the induced permutation $\bar{\sigma}$ whose length is a divisor of the length of $\nu$. 

Note that the maps $\sigma|_{Q_i}$ carry information about the labelling, but not really about the structure of the symmetry, as all $\cG$-structure pertaining to a common cycle need to be isomorphic anyway. Up to relabelling, an $\cF \circ \cG$ is already fully described by its induced $\cF$-symmetry and a family of $\cG$-symmetries, one for each cycle of the $\cF$-symmetry: 

\begin{proposition}
	\label{pro:cons}
	If we are given an $\cF$-symmetry $(m, \sigma_m)$ and for each of its cycles $c$ a $\cG$-symmetry $(G_c, \sigma_c)$, then there is a canonical way to assemble an $\cF \circ \cG$ symmetry out of these objects.
\end{proposition}

The details of the construction are as follows. For each cycle $c$ of $\sigma_m$ let $Q_c$ denote the label set of the $\cG$-object $G_c$. For every atom $e$ of the cycle $c$ set $Q_e := Q_c \times \{e\}$ and $(G_{Q_e}, \sigma_{Q_e}) := \Sym(\cG)[f_e](G_c, \sigma_c)$ with $f_e: Q_c \to Q_e$ the canonical bijection. For any label $e$ of the $\cF$-structure $m$ set $f(e) := Q_e$ and let $\pi$ denote the set of all sets $Q_e$. Thus $F := \cF[f](m)$ is an $\cF$-structure with label set $\pi$ and $C := (\pi, F, (G_{Q})_{Q \in \pi})$ is an $\cF \circ \cG$-structure. Let $c$ be a cycle of $\sigma_m$ and $\nu$ a cycle of $\sigma_c$. Fix  an atom $b = b(c)$ of $c$ and an atom $a = a(\nu)$ of $\nu$. Let $\ell$ denote the length of $c$ and $k$ the length of $\nu$. Form the composed cycle  by
\[
((a,b), \ldots, (a, c^{\ell-1}(b)), (\nu(a),b), \ldots, (\nu(a), c^{\ell-1}(b)), \ldots, (\nu^{k-1}(a), b), \ldots, (\nu^{k-1}(a), c^{\ell-1}(b))).
\]
Then the product $\sigma$ of all composed cycles (formed by all choices of $c$ and $\nu$) is an automorphism of the $\cF \circ \cG$-structure $C$. The composed cycles are pairwise disjoint, hence it does not matter in which order we take the product. Note that $\sigma$ does not depend on the choice of the $a$'s but  different choices of the $b$'s result in a different automorphism $\sigma$. More precisely, if for a given cycle $c$ of $\sigma_m$ we choose $c(b)$ instead of $b$, then the resulting automorphism is given by the conjugation $(\text{id}, c) \sigma (\text{id}, c)^{-1}$ instead of $\sigma$. But $(\text{id}, c)$ is an automorphism of the $\cF \circ \cG$-structure $C$, hence the resulting symmetry $(C, (\text{id}, c) \sigma (\text{id}, c)^{-1})$ is isomorphic to $(C, \sigma)$. This implies that the isomorphism type of $(C, \sigma)$ does not depend on the choices of the $a$'s and $b$'s.




\section{Cycle pointing}
\label{sec:op2}
\label{sec:cycle}
\subsection{The cycle pointing operator}
Bodirsky, Fusy, Kang and Vigerske \cite{MR2810913} introduced the cycle pointing operator which maps a species $\cG$ to the species $\cG^\circ$ such that the $\cG^\circ$-objects over a set $U$ are pairs $(G, \tau)$ with $G \in \cG[U]$ and $\tau$ a {\em marked} cycle of an arbitrary automorphism of $G$. Here we count fixpoints as $1$-cycles. The transport is defined by $\sigma.(G, \tau) = (\sigma.G, \sigma \tau \sigma^{-1})$. Any subspecies $\cS \subset \cG^\circ$ is termed {\em cycle-pointed}. The {\em symmetric} cycle-pointed species $\cG^\circledast \subset \cG^\circ$ is defined by restricting to pairs $(G, \tau)$ with $\tau$ a cycle of length at least $2$.

A {\em rooted symmetry} of the cycle-pointed species $\cS \subset \cG^\circ$ is a quadruple $((G, \tau), \sigma, v)$ such that $(G, \tau)$ is a $\cS$-object, $\sigma$ is an automorphism of $G$, $\tau$ is a cycle of $\sigma$ and $v$ is an atom of the cycle $\tau$. Its {\em weight monomial} is given by \[w_{((G, \tau), \sigma, v)} = \frac{t_\ell}{s_\ell} w_{(G, \sigma)}(s_1, s_2, \ldots)\] with $w_{(G, \sigma)}$ denoting the weight of the symmetry $(G, \sigma)$ and $\ell$ the length of the marked cycle $\tau$. We may form the species $\RSym(\cS)$ of rooted symmetries of $\cS$. The pointed cycle index sum of $\cS$ is given by \[\bar{Z}_\cS(s_1, t_1; s_2, t_2; \ldots) = \sum_{(G, \tau, \sigma, v)} w_{(G, \tau, \sigma, v)} \in \ndQ[[s_1, t_1; s_2, t_2; \ldots]]\] with the sum index ranging over the set $\bigcup_{n \in \ndN_0} \RSym(\cS)[n]$.

Let $\cG^\circ_{(\ell)} \subset \cG^\circ$ denote the subspecies given by all cycle pointed objects whose marked cycle has length $\ell$. It follows from the definition of the pointed cycle index sum that
\[
\bar{Z}_{\cG^\circ_{(\ell)}} = \ell t_{\ell} \frac{\partial}{\partial s_\ell} Z_\cG.
\]
Since $\cG^\circ = \sum_{\ell=1}^\infty \cG^\circ_{(\ell)}$ it follows that
\[
\bar{Z}_{\cG^\circ} = \sum_{\ell =1}^\infty \ell t_\ell \frac{\partial}{\partial s_\ell} Z_\cG \quad \text{and} \quad \bar{Z}_{\cG^\circledast} = \sum_{\ell =2}^\infty \ell t_\ell \frac{\partial}{\partial s_\ell} Z_\cG 
.\]

\begin{lemma}[{\cite[Lem. 14]{MR2810913}}]
	\label{le:oneton2}
	Let $U$ be a finite set with $n$ elements and fix an arbitrary linear order on $U$.
	\begin{enumerate}[\qquad 1)]
		\item  The following map is bijective:
		\begin{align*}
		\RSym(\cS)[U] &\to \Sym(\cS)[U], \\
		M=((G,\tau), \sigma,v) &\mapsto ((\tau^{1-\ell(M)}.G, \tau), \sigma \tau^{\ell(M)-1})
		\end{align*}
		with $\ell(M)$ defined as follows: let $k$ denote the length of the cycle $\tau$ and $u$ its smallest atom. Let $0 \le \ell(M) \le k-1$ be the unique integer satisfying $v = \tau^{\ell(M)}.u$. 
		\item
		Any unlabelled cycle-pointed $\cS$-object $m$ of size $n$ corresponds to precisely $n!$ rooted c-symmetries from $\RSym(\cS)[U]$ having the property that the isomorphism type of the underlying $\cS$-object equals $m$.
	\end{enumerate}
\end{lemma}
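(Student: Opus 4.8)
The plan is to prove part~1) first, since part~2) follows from it by an orbit-counting argument essentially identical to the proof of Lemma~\ref{le:oneton}. For part~1), I would first check that the proposed map is well-defined, i.e.\ that the image $((\tau^{1-\ell(M)}.G, \tau), \sigma\tau^{\ell(M)-1})$ genuinely lies in $\Sym(\cS)[U]$: one must verify that $\tau$ is still a valid marked cycle of the transported object $\tau^{1-\ell(M)}.G$ (it is, because conjugating $\tau$ by $\tau^{1-\ell(M)}$ leaves $\tau$ fixed, so $(\tau^{1-\ell(M)}.G,\tau)$ is indeed a $\cG^\circ$-object lying in $\cS$ since $\cS$ is a subspecies closed under transport), and that $\sigma\tau^{\ell(M)-1}$ is an automorphism of $\tau^{1-\ell(M)}.G$ having $\tau$ as one of its cycles. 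The automorphism condition is the computation $\tau^{1-\ell(M)}.\big(\sigma\tau^{\ell(M)-1}\big) = \tau^{1-\ell(M)}(\sigma\tau^{\ell(M)-1})\tau^{\ell(M)-1}$ acting correctly; more precisely one uses that $\sigma$ fixes $G$ and $\tau^{\ell(M)-1}$ conjugates $\tau$ to itself, so $(\sigma\tau^{\ell(M)-1})$ fixes $\tau^{1-\ell(M)}.G$. Since $\tau$ commutes with any power of itself, $\tau$ remains a cycle of $\sigma\tau^{\ell(M)-1}$.

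Next I would construct the inverse map explicitly. Given a symmetry $((H,\tau),\rho)\in\Sym(\cS)[U]$ with $\tau$ a cycle of $\rho$ of length $k$ and smallest atom $u$, the underlying cycle-pointed object $(H,\tau)$ has a canonical atom in its marked cycle, namely $u$; so one can only hope to recover $M$ if the distinguished atom $v$ of $M$ is encoded somewhere. The point is that in the forward map the atom $v$ was "absorbed" into the shift $\tau^{\ell(M)-1}$, and $\ell(M)$ can be read back off as the unique exponent making $\rho\tau^{1-\ell(M)}$ fix $\tau^{\ell(M)-1}.H$ with the correct root — concretely, the inverse sends $((H,\tau),\rho)$ to $M = ((\tau^{\ell-1}.H,\tau), \rho\tau^{1-\ell}, \tau^{\ell-1}.u)$ where I will need to determine which $\ell\in\{0,\dots,k-1\}$ is forced. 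I expect that in fact \emph{every} $\ell$ gives a valid rooted $c$-symmetry, and that the map $M\mapsto ((\tau^{1-\ell(M)}.G,\tau),\sigma\tau^{\ell(M)-1})$ together with remembering $\ell(M)$ is the bijection; but the lemma asserts the map to $\Sym(\cS)[U]$ alone is already bijective, so the content is that the pair $(G,v)$ can be uniquely reconstructed from $(H,\rho)$. I would verify this by a direct computation: apply the forward map, then the candidate inverse, and check both composites are the identity, using repeatedly that $\tau$ commutes with its own powers and that $\tau^{1-\ell(M)}$ is a transport isomorphism (so that the definition of $\ell$ via $v=\tau^{\ell(M)}.u$ transports consistently).

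For part~2), fix the finite set $U$ with $|U|=n$ and a linear order on it. The group $\mathfrak{S}_U$ of permutations of $U$ acts on $\RSym(\cS)[U]$ by transport, $\pi.((G,\tau),\sigma,v) = ((\pi.G,\pi\tau\pi^{-1}),\pi\sigma\pi^{-1},\pi(v))$. Two rooted $c$-symmetries lie in the same orbit iff their underlying cycle-pointed objects are isomorphic, i.e.\ have the same unlabelled type; and the stabiliser of $((G,\tau),\sigma,v)$ is trivial because any $\pi$ fixing it must fix every label (it fixes $G$, $\sigma$, hence acts as an automorphism, and fixing the pointed atom $v$ of the marked cycle pins down the whole cycle, and then triviality follows as in the classical argument — here I would cite or re-derive the standard fact that a symmetry has trivial stabiliser under the relabelling action once one additionally remembers enough structure; actually the cleanest route is to transfer triviality through the bijection of part~1), since $\Sym(\cS)[U]$-elements already have this property by Lemma~\ref{le:oneton}). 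Hence each orbit has size exactly $n!$, and the orbits are in bijection with unlabelled cycle-pointed $\cS$-objects of size $n$, giving the claim.

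The main obstacle I anticipate is the bookkeeping in part~1): correctly tracking how the marked cycle $\tau$, the automorphism, and the pointed atom transform under the transport by the power $\tau^{1-\ell(M)}$, and in particular pinning down why $\ell(M)$ is \emph{uniquely} recoverable from the image in $\Sym(\cS)[U]$ rather than merely well-defined from $M$. This is purely a matter of careful permutation algebra — the repeated use of "$\tau$ commutes with $\tau^j$" and "transport along $\tau^j$ sends $\tau$ to $\tau$" — but it is easy to get an index off by one, so I would set up notation for the smallest atom $u$ of $\tau$ at the outset and express every object relative to it.
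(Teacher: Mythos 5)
The paper itself does not prove this lemma --- it is quoted from \cite[Lem.~14]{MR2810913} --- so your plan has to be judged against the standard argument. Your architecture is right: establish the bijection of part~1), note that it preserves the isomorphism type of the underlying cycle-pointed object (because $(\tau^{1-\ell(M)}.G,\tau)$ is the transport of $(G,\tau)$ along $\tau^{1-\ell(M)}$, as $\tau$ commutes with its powers), and deduce part~2) from Lemma~\ref{le:oneton} applied to the species $\cS$. But your execution of part~1) contains two false claims precisely at the point where the work lies. First, membership in $\Sym(\cS)[U]$ does not require $\tau$ to be a cycle of the image permutation; it requires $\rho:=\sigma\tau^{\ell(M)-1}$ to fix the cycle-pointed object, i.e.\ $\rho.H=H$ and $\rho\tau\rho^{-1}=\tau$ for $H=\tau^{1-\ell(M)}.G$. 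Your assertion that ``$\tau$ remains a cycle of $\sigma\tau^{\ell(M)-1}$'' is wrong: since $\tau$ is a cycle of $\sigma$, the restriction of $\sigma\tau^{\ell(M)-1}$ to $\supp(\tau)$ is $\tau^{\ell(M)}$, which for $\ell(M)=0$ is the identity and in general splits into $\gcd(\ell(M),k)$ cycles. Second, your justification that $\sigma\tau^{\ell(M)-1}$ fixes $\tau^{1-\ell(M)}.G$ (``because $\sigma$ fixes $G$ and $\tau^{\ell(M)-1}$ conjugates $\tau$ to itself'') is a non sequitur: functoriality gives $(\sigma\tau^{\ell(M)-1}).(\tau^{1-\ell(M)}.G)=\sigma.G=G$, and a single power of one cycle of an automorphism is in general \emph{not} itself an automorphism of $G$ (take the spider with centre $c$ and three paths $c,a_i,b_i$, $\sigma=(a_1a_2a_3)(b_1b_2b_3)$, $\tau=(a_1a_2a_3)$). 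So well-definedness is exactly where care is needed, and your plan dismisses it with an incorrect computation.

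The missing idea, which also powers the inverse, is the following: for any $((H,\tau),\rho)\in\Sym(\cS)[U]$, the condition $\rho\tau\rho^{-1}=\tau$ forces $\rho$ to preserve $\supp(\tau)$ and to restrict there to a \emph{uniquely determined} power $\tau^{j}$ with $0\le j\le k-1$, because the centralizer of a $k$-cycle in the symmetric group of its support is the cyclic group it generates; the case ``$\tau$ is a cycle of $\rho$'' is exactly $j=1$. The bijection trades the marked atom $v=\tau^{\ell}.u$ for this exponent $j=\ell$, and reading off $j$ from the image is what makes $v$ recoverable --- your proposed decoding rule (``the unique exponent making $\rho\tau^{1-\ell}$ fix $\tau^{\ell-1}.H$ with the correct root'') is circular without this observation. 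Finally, in part~2) the stabiliser of a rooted $c$-symmetry under relabelling is \emph{not} trivial in general (for a marked $1$-cycle $\{v\}$ with $\sigma=\mathrm{id}$ on a star centred at $v$, the stabiliser is the full group permuting the leaves), so ``each orbit has size $n!$'' fails; the correct route is the one you fall back on at the end, namely the orbit--stabiliser bookkeeping of Lemma~\ref{le:oneton} for $\cS$ transported through the type-preserving bijection of part~1).
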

In particular, the pointed cycle index sum relates to the ordinary generating series by
\[
\tilde{\cS}(x) = \bar{Z}_{\cS}(x,x; x^2, x^2; \ldots).
\]
Moreover, if we draw an element from $\RSym(\cS)[n]$ uniformly at random, then the isomorphism class of the corresponding cycle pointed structure is uniformly distributed among all unlabelled cycle-pointed $\cS$-objects of size $n$. The main point of the cycle-pointing construction is evident from the following fact.
\begin{lemma}[{\cite[Thm. 15]{MR2810913}}]
	\label{le:cyc}
	Any unlabelled $\cG$-structure $m$ of size $n$ may be cycle-pointed in precisely $n$ ways, that is, there exist precisely $n$ unlabelled $\cG^\circ$-structures with corresponding $\cG$-structure~$m$. 
\end{lemma}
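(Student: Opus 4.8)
**Proof proposal for Lemma~\ref{le:cyc} (cycle-pointing counts: each unlabelled $\cG$-structure of size $n$ admits exactly $n$ unlabelled cycle-pointings).**

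The plan is to work over a fixed $n$-element label set $U$ and use the bijection from Lemma~\ref{le:oneton2}(2) together with the counting of symmetries from Lemma~\ref{le:oneton}, reducing the problem to a purely group-theoretic orbit count. First I would fix an unlabelled $\cG$-structure $m$ of size $n$ and choose a representative $G \in \cG[U]$ with isomorphism type $m$; let $A = \Aut(G) \subset \Sym(U)$ be its automorphism group. A cycle-pointed structure with underlying $\cG$-object $G$ is a pair $(G,\tau)$ where $\tau$ is a cycle of \emph{some} automorphism of $G$; equivalently $\tau$ is a cyclically ordered tuple $(v_0, v_1, \dots, v_{k-1})$ of distinct elements of $U$ such that the cyclic permutation sending $v_i \mapsto v_{i+1 \bmod k}$ (extended by the identity) lies in $A$ — call such a $\tau$ an \emph{$A$-admissible cycle}. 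Two cycle-pointed structures $(G,\tau)$ and $(G',\tau')$ are isomorphic iff there is $\phi \in \Sym(U)$ with $\phi.G = G'$ and $\phi \tau \phi^{-1} = \tau'$; restricting to the representative $G$, the unlabelled cycle-pointings of $G$ correspond to $A$-orbits (under conjugation) on the set of $A$-admissible cycles.

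The key step is then a double-counting argument counting pairs $((G,\tau),\sigma,v)$, i.e.\ rooted $c$-symmetries in $\RSym(\cG^\circ)[U]$ whose underlying unlabelled cycle-pointed object ranges over the cycle-pointings of $m$. On one hand, by Lemma~\ref{le:oneton2}(2), each such unlabelled cycle-pointing contributes exactly $n!$ rooted $c$-symmetries, so if there are $N$ distinct unlabelled cycle-pointings of $m$ the total count is $N \cdot n!$. On the other hand I would count these rooted $c$-symmetries directly: such a quadruple consists of an automorphism $\sigma$ of $G$, a cycle $\tau$ of $\sigma$, and a marked atom $v$ of $\tau$; summing over $\sigma \in A$ the number of (cycle of $\sigma$, atom of that cycle) pairs is exactly $|U| = n$ for each $\sigma$ (every atom of $U$ lies in exactly one cycle of $\sigma$), giving $n \cdot |A|$ quadruples with underlying $\cG$-object exactly $G$. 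Finally, by Lemma~\ref{le:oneton} the number of $\cG$-objects over $U$ isomorphic to $G$ is $n!/|A|$ (the orbit-stabiliser count, since $|\Sym(U)| = n!$ and the stabiliser of $G$ is $A$), so the total number of rooted $c$-symmetries whose underlying $\cG$-structure has type $m$ is $(n!/|A|) \cdot n \cdot |A| = n \cdot n!$. Equating $N \cdot n! = n \cdot n!$ yields $N = n$.

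The main obstacle I anticipate is bookkeeping the equivalence relations correctly: one must be careful that ``unlabelled cycle-pointed $\cG$-object'' genuinely means an isomorphism class of pairs $(G,\tau)$ under the transport $\sigma.(G,\tau) = (\sigma.G, \sigma\tau\sigma^{-1})$, and that Lemma~\ref{le:oneton2}(2) is being applied to the cycle-pointed species $\cS = \cG^\circ$ itself (legitimate, since $\cG^\circ \subset \cG^\circ$). One should also check the edge case where $m$ has nontrivial automorphisms so that distinct labelled cycles $\tau$ can become identified — this is exactly what the orbit count absorbs, and the double-counting sidesteps any need to analyse it case by case. An alternative, more hands-on route is to argue directly that the $A$-orbits on $A$-admissible cycles biject with the $n$ vertices of $m$ by sending an orbit to the common orbit-of-$A$ (i.e.\ the set of atoms appearing in cycles in that class) it ``points at'' — but making that correspondence precise essentially reproves Lemma~\ref{le:oneton2}, so the double-counting via the already-established lemmas is cleaner.
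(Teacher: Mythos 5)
The paper does not prove this lemma itself but cites it from Bodirsky--Fusy--Kang--Vigerske; your double-counting argument is correct and is essentially the standard proof of that cited result: counting rooted $c$-symmetries over $U$ with underlying $\cG$-type $m$ once via Lemma~\ref{le:oneton2}(2) as $N\cdot n!$ and once directly as $(n!/|A|)\cdot n\cdot|A|$ (each automorphism $\sigma$ contributing one pair per atom, since an atom determines its cycle) gives $N=n$. The bookkeeping of the isomorphism relation on pairs $(G,\tau)$ is handled correctly, so I see no gap.
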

Considered from a probabilistic viewpoint, this means that if we draw an unlabelled $\cG^\circ$-structure of size $n$ uniformly at random, then the underlying $\cG$-object is also uniformly distributed. Moreover, Lemma~\ref{le:oneton2} tells us that in order to sample the $\cG^\circ$-object we may sample a rooted symmetry of this size uniformly at random. 

Studying the random $\cG^\circ$-object might be easier due to the additional information given by the marked cycle.
Moreover, Lemma~\ref{le:cyc} implies that \[\tilde{\cG}^\circ(z) = z \frac{\text{d}}{\text{d}z} \tilde{\cG}(z).\]
The pointed cycle index sum of the species $\Set$ is given by \begin{align}
\label{eq:setpoint}
\bar{Z}_{\Set^\circ} = \sum_{\ell=1}^\infty  \ell t_{\ell} \frac{\partial}{\partial s_\ell} Z_\Set(s_1,s_2,\ldots) =\exp \left(\sum_{i=1}^\infty s_i / i  \right)\sum_{\ell = 1}^\infty t_\ell.
\end{align}

\subsection{Operations on cycle pointed species}
Cycle pointed species come with a set of new operations introduced in \cite{MR2810913}. If $\cS \subset \cG^\circ$ is a cycle-pointed species and $\cH$ a species, then the {\em pointed product} $\cS \star \cH$ is the subspecies of $(\cG \cdot \cH)^\circ$ given by all cycle-pointed objects such that the marked cycle consists of atoms of the $\cG$-structure and the $\cG$-structure together with this cycle belongs to $\cS$. The corresponding pointed cycle index sum is given by \[\bar{Z}_{\cS \star \cH} = \bar{Z}_{\cS} Z_\cH.\] 

The cycle-pointing operator obeys the following product rule
\[
(\cG \cdot \cH)^\circ \simeq \cG^\circ \star \cH + \cH^\circ \star \cG.
\]

If $\cH[\emptyset]=\emptyset$ we may form the {\em pointed substitution} $\cS \circledcirc \cH \subset (\cG \circ \cH)^\circ$ as follows. Any $(\cG \circ \cH)^\circ$-structure $P$ has a marked cycle $\tau$ of some automorphism $\sigma$. By the discussion in Section~\ref{sec:symmetry}, this cycle corresponds to a cycle on the $\cG$-structure of $P$ which does not depend on the choice of $\sigma$. Hence the $\cG$-structure of $P$ is cycle-pointed and we say $P$ belongs to $\cS \circledcirc \cH$ if and only if this cycle pointed $\cG$-structure belongs to $\cS$. The corresponding pointed cycle index sum is given by
\begin{align*}
\bar{Z}_{\cS \circledcirc \cH} = \bar{Z}_{\cS}(& Z_\cH(s_1, s_2, \ldots), \bar{Z}_{\cH^\circ}(s_1, t_1; s_2, t_2; \ldots); \\ & Z_\cH(s_2, s_4, \ldots), \bar{Z}_{\cH^\circ}(s_2, t_2; s_4, t_4; \ldots); \ldots).
\end{align*}

\section{(P\'olya-)Boltzmann samplers}
\label{sec:Boltzmann}

Boltzmann samplers were introduced in  \cite{MR2062483, MR2095975, MR2498128} and generalized to P\'olya--Boltzmann samplers in \cite{MR2810913}. We briefly discuss the background to the extend required in our proofs. 

\subsection{Boltzmann models}
The {\em P\'olya--Boltzmann model} was introduced in \cite{MR2810913}: Suppose that we are given a sequence of real numbers $s_1, s_2, \ldots \ge 0$ such that $0 < Z_{\cF}(s_1, s_2, \ldots) < \infty$. Then we may consider the probability distribution on the set $\bigcup_{n=0}^\infty \Sym(\cF)[n]$ that assigns the probability weight
\[
w_{(F, \sigma)} Z_{\cF}(s_1,s_2, \ldots)^{-1} = \frac{s_1^{\sigma_1} s_2^{\sigma_2} \cdots}{n!} Z_{\cF}(s_1,s_2, \ldots)^{-1}
\]
for each $n$ and symmetry $(F, \sigma) \in \Sym(\cF)[n]$. Here $\sigma_i$ denotes the number of $i$-cycles of the permutation $\sigma$. The corresponding {\em P\'olya--Boltzmann sampler} is denoted by $\Gamma Z_{\cF}(s_1, s_2, \ldots)$, and simply refers to a random variable following this distribution, possibly with a description on how to sample it. When describing a sampling procedure the pseudo-code notation \begin{align} \label{eq:not}
(F, \sigma) \leftarrow \Gamma Z_{\cF}(s_1, s_2, \ldots) \end{align}
means that we let $(F, \sigma)$ denote a random $\cF$-symmetry that is independent from all previously considered random variables and sampled according to a P\'olya--Boltzmann distribution for the species $\cF$ with parameters $(s_i)_i$.

\begin{remark}
	\label{re:sampler1}
In the special case $(s_i)_i = (x^i)_i$ for some $x >0$, for each fixed $n$ it holds that all outcomes with size $n$ are equally likely. This means that $\Gamma Z_{\cF}(x, x^2, \ldots)$ conditioned on having a given deterministic size $n$ follows the uniform distribution. By Lemma~\ref{le:oneton} the $n$-sized symmetries from $\Sym(\cF)[n]$ are in a $n:1$ relation to the unlabelled $n$-sized $\cF$-objects. Thus, the $\cF$-object corresponding to the conditioned P\'olya Boltzmann sampler is uniformly distributed among all $n$-sized $\cF$-objects. 
\end{remark}

A P\'olya--Boltzmann model for random cycle pointed species is given by a probability measure on random rooted symmetries: Let $\cS$ be a cycle-pointed species. Given real non-negative numbers $(s_i, t_i)_{i \ge 1}$ such that $0<\bar{Z}_\cS(s_1,t_1; s_2, t_2; \ldots) < \infty$ we may consider the probability measure on the set $\bigcup_{n=0}^\infty \RSym(\cS)[n]$ that assigns probability weight
\[
w_{((G, \tau), \sigma,v)} \bar{Z}_{\cS}(s_1,t_1; s_2, t_2; \ldots)^{-1} = \frac{t_{\ell} s_1^{\sigma_1} \cdots s_{\ell-1}^{\sigma_{\ell-1}} s_{\ell}^{\sigma_{\ell}-1} s_{\ell +1}^{\sigma_{\ell +1}} s_{\ell + 2} ^{\sigma_{\ell +2}} \cdots}{n!\bar{Z}_{\cS}(s_1,t_1; s_2, t_2; \ldots)}
\]
for each $n$ to each rooted symmetry $((G, \tau), \sigma, v) \in \RSym(\cS)[n]$. Here $\ell$ denotes the lengths of the marked cycle $\tau$. The corresponding P\'olya--Boltzmann sampler of this model is denoted by $\Gamma \bar{Z}_\cS(s_1, t_1; s_2, t_2; \ldots)$, and we use a similar notation as in \eqref{eq:not} when describing sampling procedures.

\begin{remark}
	\label{re:sampler2}
	In the special case $(s_i, t_i)_i = (x^i, x^i)_i$ for some $x>0$, for each fixed $n$ we have that all outcomes with size $n$ are equally likely. Hence conditioning $\Gamma \bar{Z}_\cS(x,x; x^2, x^2; \ldots)$ on having size $n$ yields the uniform distribution on $\RSym(\cF)[n]$. By Lemma~\ref{le:oneton2} we know that the rooted symmetries from $\RSym(\cF)[n]$ are in an $n : 1 $ relation to the unlabelled $n$-sized cycle-pointed $\cS$-objects. Thus, the $\cS$-object corresponding to the conditioned P\'olya--Boltzmann sampler follows the uniform distribution among all $n$-sized cycle pointed $\cS$-objects.
\end{remark}

\subsection{Rules for the construction of Boltzmann samplers}
\label{sec:boltzconstruction}

The sampling procedures described in the present exposition were established in \cite[Prop. 38, Prop. 43]{MR2810913}.


\subsubsection{P\'olya--Boltzmann samplers}
\label{sec:pobosa}

Let $\cF$ denote a species and $(s_i)_{i \ge 1}$ non-negative real numbers such that 
\[
	0 < Z_{\cF}(x_1, x_2, \ldots) < \infty.
\]

\subsection*{Products}
Suppose that $\cF = \cF_1 \cdot \cF_2$ is the product of two species $\cF_1$ and $\cF_2$. Then for any finite set $U$ there is a bijection between the set $\Sym(\cF)[U]$ and pairs $(S_1, S_2)$ such that $S_i$ is an $\cF_i$-symmetry for all $i$ and the label sets of the $S_i$ partition the set $U$. This is due to the fact, that given an $\cF$-symmetry $((F_1, F_2), \sigma) \in \Sym(\cF)[U]$ the permutation $\sigma$ must leave the label set $Q_i$ of the $\cF_i$-object $F_i$ invariant and satisfy $\sigma|_{Q_i} .F_i = F_i$, that is $(F_i, \sigma|_{Q_i}) \in \Sym(\cF_i)[Q_i]$. The following pseudo-code procedure is a P\'olya--Boltzmann sampler for the species $\cF$.

\begin{enumerate}[\qquad 1.]
	\item For $i=1,2$ set
	\[
	S_i \leftarrow \Gamma Z_{\cF_i}(s_1, s_2, \ldots).
	\] By the bijection for the symmetries of products, the pair $(S_1, S_2)$ corresponds to an $\cF$-symmetry $(F, \sigma)$ over the (exterior) disjoint union $U$ of the label-sets of the $S_i$. 
	\item Make a uniformly at random choice for a bijection $\nu$ from $U$ to the set of integers $[n]$ with $n$ denoting the size of $U$. Return the relabelled symmetry 
	\[
	\nu.(F,\sigma) = (\nu.F, \nu \sigma \nu^{-1}).
	\]
\end{enumerate}

\subsection*{Substitution}
Suppose that $\cF = \cG \circ \cH$ with $\cH[\emptyset] = \emptyset$ is the composition of a species $\cG$ with another species $\cH$. The symmetries of the substitution were discussed in detail in Section~\ref{sec:symmetry}. The following procedure is a P\'olya--Boltzmann sampler for $\cF$.
\begin{enumerate}[\qquad 1.]
	\item Set
	\[
	(G, \sigma) \leftarrow \Gamma Z_{\cG}(Z_{\cH}(s_1, s_2, \ldots), Z_{\cH}(s_2, s_4, \ldots), \ldots).
	\]
	That is, let $(G, \sigma)$ denote a random $\cG$-symmetry that follows a P\'olya--Boltzmann distribution with parameters $Z_{\cH}(s_1, s_2, \ldots), Z_{\cH}(s_2, s_4, \ldots), \ldots$.
	\item For each cycle $\tau$ of $\sigma$ let $|\tau|$ denote its lengths and set
	\[
	(H_\tau, \sigma_\tau) \leftarrow \Gamma Z_{\cH}(s_{|\tau|}, s_{2|\tau|}, \ldots).
	\]
	That is, the symmetries $(H_\tau, \sigma_\tau)$, $\tau$ cycle of $\sigma$, are independent (conditional on $\sigma$) and follow P\'olya--Boltzmann distributions.
	\item For each cycle $\tau$, make $|\tau|$ identical copies copies of $(H_\tau, \sigma_\tau)$ and assemble an $\cF$-symmetry $(F, \gamma)$ out of $(G, \sigma)$ and the copies of the $(H_\tau, \sigma_\tau)$ as described in Proposition~\ref{pro:cons}.
	\item Choose bijection $\nu$ from the vertex set of $(F, \gamma)$ to an appropriate sized set of integers $[n]$ and return the relabelled symmetry
	\[
	\nu.(F, \gamma) = (\nu.F, \nu \gamma \nu^{-1}).
	\]
\end{enumerate}

\subsubsection{P\'olya--Boltzmann samplers for cycle-pointed species}
In the following, we suppose that $\cF$ is a cycle pointed species and  that $s_1, t_1, s_2,t_2, \ldots$ are non-negative real numbers such that \[
0 < \bar{Z}_\cF(s_1,t_1; s_2,t_2; \ldots) < \infty.
\]

\label{sec:pobocyc}

\subsection*{Cycle pointed products}
Suppose that $\cF = \cG \star \cH$ with $\cG$ a cycle-pointed species and $\cH$ a species. Then for any finite set $U$ there is a canonical choice for a bijection between the set $\RSym(\cF)[U]$ and tuples $(S_1,S_2)$ with $S_1$ a rooted symmetry of $\cG$, $S_2$ a symmetry of $\cG$, such that the label sets of $S_1$ and $S_2$ form a partition of $U$. 
The following procedure is a P\'olya--Boltzmann sampler for $\cF$.

\begin{enumerate}[\qquad 1.]
	\item Set
	\[
	S_1 \leftarrow \Gamma \bar{Z}_{\cG}(s_1,t_1; s_2,t_2; \ldots).
	\]
	\item Set
	\[
	S_2 \leftarrow \Gamma Z_{\cH}(s_1, s_2, \ldots).
	\]
	\item Let $U$ denote the exterior disjoint union of the label sets of $S_1$ and $S_2$. The tupel $(S_1,S_2)$ corresponds to a rooted symmetry $S$ over the set $U$.
	\item Make a uniformly at random choice of a bijection $\nu$ from $U$ to the set of integers $[n]$ with $n$ denoting the size of $U$. Return the relabelled rooted symmetry 
	$
	\nu.S$.
\end{enumerate}

\subsection*{Cycle pointed substitution}
Suppose that $\cF = \cG \circledcirc \cH$ with $\cG$ cycle-pointed and $\cH[\emptyset] = \emptyset$. The symmetries of the substitution were discussed in detail in Section~\ref{sec:symmetry}. The following procedure is a P\'olya--Boltzmann sampler for $\cF$.
\begin{enumerate}[\qquad 1.]
	\item Set 
	\[
	((G, \tau_0), \sigma,v_0) \leftarrow \Gamma \bar{Z}_{\cG}(h_1, \bar{h}_1; h_2, \bar{h}_2; \ldots) 
	\]
	with parameters
	\[
	h_i = Z_{\cH}(s_i, s_{2i}, \ldots) \quad \text{and} \quad  \bar{h}_i = \bar{Z}_{\cH^\circ}(s_i,t_i; s_{2i},t_{2i}; \ldots).
	\]

	\item For each unmarked cycle $\tau$ of $\sigma$ let $|\tau|$ denote its lengths and set
	\[
	(H_\tau, \sigma_\tau) \leftarrow \Gamma Z_{\cH}(s_{|\tau|}, s_{2|\tau|}, \ldots).
	\]
	\item For the marked cycle $\tau_0$ set
	\[
	( (H_{\tau_0}, c_{\tau_0}), \sigma_{\tau_0}, v_{\tau_0}) \leftarrow \Gamma Z_{\cH^\circ}(s_{|\tau_0|},t_{|\tau_0|};  s_{2|\tau_0|},t_{2|\tau_0|}; \ldots).
	\]

	\item Assemble an $\cF$-symmetry $(F, \gamma)$  out of the $\cG$-symmetry $(G, \sigma)$ and the $\cH$-symmetries $(H_\tau, \sigma_\tau)$ according to the construction of Proposition~\ref{pro:cons}.
	
	Let $c$ denote the cycle that gets composed out of the $|\tau_0|$ copies of the cycle $c_{\tau_0}$ in this construction. The marked vertex $v_{\tau_0}$ has $|\tau_0|$ copies (one for each atom of $\tau_0$) and we let $u$ denote the copy that corresponds to the marked atom $v_0$ of $\tau_0$. Thus
	\[
	((F,c), \gamma,u)
	\]
	is a rooted symmetry of $\cF$.
	\item Choose a bijection $\nu$ from the vertex set of $((F,c), \gamma,u)$ to an appropriate sized set of integers $[n]$ and return the relabelled rooted symmetry
	\[
	\nu.((F,c), \gamma,u) = ((\nu.F, \nu c \nu^{-1}), \nu \gamma \nu^{-1}, \nu.u).
	\]
\end{enumerate}

\section{Random multisets}
\label{sec:mult}

If $\cF$ is a species of structures with $\cF[\emptyset] = \emptyset$, then unlabelled $\Set \circ \cF$-objects are termed \emph{multisets}.  They consist of unordered collections of unlabelled $\cF$-objects where each object is allowed to appear multiple times. The following preliminary observation is a consequence of a more general result established by Barbour and Granovsky~\cite[Thm. 2.2]{MR2121024}.

\begin{lemma}
	\label{le:part}
	Suppose that 
	\[
		[z^n] \tilde{\cF}(z) = f(n) n^{-\beta} \rho^n
	\]
	for some constants $\rho >0$ and $\beta >1$, and a function $f$ that varies slowly at infinity. Then the largest component in a uniform  $n$-sized multiset of unlabelled $\cF$-structures has size $n + O_p(1)$.
\end{lemma}

\section{Proof of the main theorems}
\label{sec:mainproof}
{\em
	Throughout this section, let $\Omega$ be a set of positive integers containing the number $1$ and at at least one integer equal or greater than $3$. We let $\cF$ denote the species of unrooted trees and $\cF_\Omega$ its subspecies of trees with vertex degrees in the set $\Omega$. Analogously, we let $\cA$ denote the species of rooted trees and $\cA_{\Omega^*}$ the subspecies of rooted trees with vertex outdegrees in the shifted set $\Omega^* = \Omega - 1$. In the following we will always assume that $n$ denotes an integer satisfying $n \equiv 2 \mod \gcd(\Omega^*)$ and $n$ large enough such that trees with $n$ vertices and vertex degrees in the set $\Omega$ exist. Let $\rho$ denote the radius of convergence of the generating series $\tilde{\cA}_{\Omega^*}(z)$.
	
	We let $(\mT_n, \tau_n)$ denote a random cycle-pointed tree drawn uniformly from the unlabelled $\cF_\Omega^\circ$-objects of size $n$. As discussed in Lemma~\ref{le:oneton}, this implies that $\mT_n$ is the uniform random unlabelled unrooted tree with $n$ vertices and vertex degrees in the set $\Omega$. Moreover, let $\mA_{n-1}$ a random rooted tree drawn uniformly from the unlabelled $\cA_{\Omega^*}$-objects of size $n-1$.

	We let $c_{\Omega^*}>0$ denote the constant from Equation~\eqref{eq:convpolya} such that the uniformly drawn unlabelled rooted tree $\mA_{n-1}$ satisfies
	\[
	(\mA_{n-1}, c_{\Omega^*} n^{-1/2} d_{\mA_{n-1}}) \convdis (\CRT, d_{\CRT})
	\] with respect to the Gromov--Hausdorff metric. Moreover, let $\hat{\mA}_{\Omega^*}$ denote the infinite rooted tree from Equation~\eqref{eq:localconv} with
	\[
		d_{\textsc{TV}}( V_{k_n}(\mA_{n-1}, u_{n-1}), V_{k_n}(\hat{\mA}_{\Omega^*})) \to 0
	\]
	for every sequence $k_n = o(\sqrt{n})$,  with $u_{n-1}$ denoting a uniformly at random selected vertex of $\mA_{n-1}$.
}

\subsection{Decomposition of cycle-pointed trees}

\label{SeFrsec:enumprop}

Given a cycle pointed tree $(T, \tau)$ such that the marked cycle $\tau$ has length at least $2$ we may consider its {\em connecting paths}, i.e. the paths in $T$ that join consecutive atoms of $\tau$. Any such path has a middle, which is either a vertex if the path has odd length, or an edge if the path has even length. All connecting paths have the same lengths and by \cite[Claim 22]{MR2810913} they share the same middle, called the {\em center of symmetry}. See Figure~\ref{SeFrfi:conpa} for an illustration.

\begin{figure}[t]
	\centering
	\begin{minipage}{1\textwidth}
		\centering
		\includegraphics[width=0.6\textwidth]{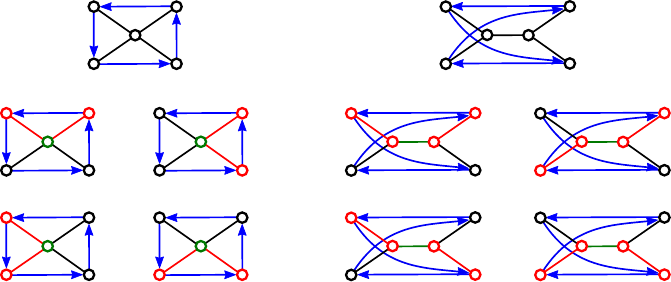}
		\caption{Two unlabelled cycle-pointed trees. The marked cycle is depicted in blue, connecting paths in red, and the cycle-pointing centers in green.}
		\label{SeFrfi:conpa}
	\end{minipage}
\end{figure}

The cycle pointing decomposition given in \cite[Prop. 25]{MR2810913} splits the species $\cF_\Omega^\circ$ into three parts,
\[
\cF_\Omega^\circ \simeq \cX^\circ \star (\Set_\Omega \circ \cA_{\Omega^*}) + \Set_{\{2\}}^\circledast  \circledcirc \cA_{\Omega^*} + (\Set_\Omega^\circledast \circledcirc \cA_{\Omega^*}) \star \cX.
\]
Here \[\cS := \cX^\circ \star (\Set_\Omega \circ \cA_{\Omega^*})\] corresponds to the trees with a marked fixpoint
and the other summands to trees with a marked cycle of length at least two. More specifically, \[\cE := \Set_{\{2\}}^\circledast  \circledcirc \cA_{\Omega^*}\] corresponds to the symmetric cycle pointed trees whose center of symmetry is an edge
and \[\cV := (\Set_\Omega^\circledast \circledcirc \cA_{\Omega^*}) \star \cX\] to those whose center of symmetry is a vertex.


\subsection{Enumerative properties}

We start by collecting some basic enumerative facts. The following preliminary observation summarizes enumerative properties of P\'olya trees with vertex degree restrictions.

\begin{proposition}[{\cite[Prop. 4.1]{2015arXiv150207180P}}] The following statements hold.
	\label{SeFrpro:ser1}
	\label{SeFrpro:cnt}
	\begin{enumerate}[i)]
		\item The radius of convergence $\rho$ of the series $\tilde{\cA}_{\Omega^*}(z)$ satisfies $0 < \rho < 1$ and $\tilde{\cA}_{\Omega^*}(\rho) < \infty$. 
		\item There is a positive constant $d_{\Omega^*}$ such that 
		\[
		[z^m]\tilde{\cA}_{\Omega^*}(z) \sim d_{\Omega^*} m^{-3/2} \rho^{-m}
		\] as the number $m \equiv 1 \mod \gcd(\Omega^*)$ tends to infinity.
		\item For any subset $\Lambda \subset \ndN$ the series
		\[E^{\Lambda}(z,w) = z Z_{\Set_{\Lambda}}(w, \tilde{\cA}_{\Omega^*}(z^2), \tilde{\cA}_{\Omega^*}(z^3), \ldots)\]
		satisfies \[E^\Lambda(\rho + \epsilon, \tilde{\cA}_{\Omega^*}(\rho) + \epsilon) < \infty\] for some $\epsilon > 0$.
	\end{enumerate}
\end{proposition}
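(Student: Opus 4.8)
The plan is to treat the three assertions in order, since (ii) and (iii) both build on the analytic-combinatorics setup established for (i). For (i), the key observation is that $\tilde{\cA}_{\Omega^*}(z)$, the ordinary generating series of P\'olya trees with outdegrees in $\Omega^*$, satisfies the functional equation
\[
\tilde{\cA}_{\Omega^*}(z) = z\, Z_{\Set_{\Omega^*}}\!\big(\tilde{\cA}_{\Omega^*}(z), \tilde{\cA}_{\Omega^*}(z^2), \tilde{\cA}_{\Omega^*}(z^3), \ldots\big),
\]
which follows from the species isomorphism $\cA_{\Omega^*} \simeq \cX \cdot (\Set_{\Omega^*} \circ \cA_{\Omega^*})$ together with the substitution rule for cycle index sums in Table~\ref{tb:egs}. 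First I would argue that $\rho>0$: since $1\in\Omega$ gives a non-trivial tree and the number of such P\'olya trees of size $m$ is at most the number of all rooted trees, which grows at most exponentially, $\rho$ is bounded below by a positive constant. That $\rho<1$ follows because there are at least exponentially many such trees (the presence of a degree $\ge 2$ in $\Omega^*$, i.e. $\ge 3$ in $\Omega$, produces exponential growth along the arithmetic progression $m\equiv 1 \bmod \gcd(\Omega^*)$). Finiteness $\tilde{\cA}_{\Omega^*}(\rho)<\infty$ is the substantive point: one separates the "diagonal" part $\tilde{\cA}_{\Omega^*}(z)$ appearing as the first argument of $Z_{\Set_{\Omega^*}}$ from the "non-diagonal" terms $\tilde{\cA}_{\Omega^*}(z^k)$ with $k\ge 2$, which converge for $z$ up to $\rho^{1/2}>\rho$ and hence contribute an analytic perturbation; a standard argument (as in \cite{MR2240769} or \cite[Thm. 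VII.4]{MR2483235}) shows the singularity of $\tilde{\cA}_{\Omega^*}$ is of square-root type and occurs at a finite value, giving $\tilde{\cA}_{\Omega^*}(\rho)<\infty$.

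For (ii), I would invoke the singularity analysis of Flajolet--Odlyzko: once (i) establishes that $\tilde{\cA}_{\Omega^*}$ has a square-root singularity at $z=\rho$, i.e. an expansion $\tilde{\cA}_{\Omega^*}(z) = \tilde{\cA}_{\Omega^*}(\rho) - c\sqrt{1-z/\rho} + o(\sqrt{1-z/\rho})$, transfer theorems yield $[z^m]\tilde{\cA}_{\Omega^*}(z)\sim d_{\Omega^*} m^{-3/2}\rho^{-m}$. The only subtlety is periodicity: when $\gcd(\Omega^*)=d>1$ all tree sizes lie in a fixed residue class mod $d$, so $\tilde{\cA}_{\Omega^*}(z)=z\,\psi(z^d)$ for a suitable series $\psi$, and there are $d$ singularities of smallest modulus, equally spaced on the circle $|z|=\rho$; restricting to $m\equiv 1\bmod d$ (equivalently $m\equiv 1 \bmod \gcd(\Omega^*)$) the contributions combine cleanly and only the dominant one matters, so the stated asymptotic holds along that progression. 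Positivity of $d_{\Omega^*}$ is automatic from $c>0$.

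For (iii), the quantity $E^{\Lambda}(z,w) = z\, Z_{\Set_{\Lambda}}(w, \tilde{\cA}_{\Omega^*}(z^2), \tilde{\cA}_{\Omega^*}(z^3),\ldots)$ is, by the formula for $Z_{\Set}$ computed in the excerpt, of the form $z\sum_{\text{partitions}}$ with the first variable specialised to $w$ and the $i$-th variable ($i\ge 2$) to $\tilde{\cA}_{\Omega^*}(z^i)$; explicitly it is dominated by $z\exp\!\big(w + \sum_{i\ge 2} \tilde{\cA}_{\Omega^*}(z^i)/i\big)$ when $\Lambda=\ndN$, and for general $\Lambda$ it is a sub-sum of this. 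I would check that at $(z,w)=(\rho,\tilde{\cA}_{\Omega^*}(\rho))$ this is finite — the $i=1$ slot is $w=\tilde{\cA}_{\Omega^*}(\rho)<\infty$ by (i), while $\sum_{i\ge2}\tilde{\cA}_{\Omega^*}(\rho^i)/i$ converges since $\rho^2<\rho$ lies strictly inside the disk of convergence and $\tilde{\cA}_{\Omega^*}$ is continuous there. Because each summand is continuous and increasing in $z$ and $w$ near $(\rho,\tilde{\cA}_{\Omega^*}(\rho))$ and the total is finite there, a monotone/dominated convergence argument extends finiteness to a neighbourhood $(\rho+\epsilon, \tilde{\cA}_{\Omega^*}(\rho)+\epsilon)$ for some $\epsilon>0$; the key input is again that $\rho<1$, so $(\rho+\epsilon)^2<\rho+\epsilon$ stays below $1$ and the tail $\sum_{i\ge2}$ remains summable.

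The main obstacle is the proof of finiteness $\tilde{\cA}_{\Omega^*}(\rho)<\infty$ in part (i): establishing that the singularity is of square-root type (rather than, say, the series blowing up at a point strictly inside where one would naively expect a singularity from the functional equation) requires the careful Drmota--Lalley--Woods-type analysis of the positive system, handling the periodic case, and is exactly what \cite[Thm. 75]{MR2240769} is invoked for; parts (ii) and (iii) are then comparatively routine consequences.
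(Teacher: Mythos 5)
Your proposal is correct and follows the same route the paper relies on: the paper does not reprove this proposition but imports it from \cite[Prop. 4.1]{2015arXiv150207180P}, which rests on exactly the Janson--Stanley--Yeats analysis \cite[Thm. 75]{MR2240769} of the positive functional equation that you invoke for the square-root singularity, with (ii) and (iii) then following by transfer theorems for (possibly periodic) algebraic--logarithmic singularities and the $\rho<1$ domination argument, just as you describe. One small point of precision in (iii): the condition you actually need is $(\rho+\epsilon)^2\le\rho$, so that $\tilde{\cA}_{\Omega^*}((\rho+\epsilon)^i)$ is finite for all $i\ge 2$, not merely that $(\rho+\epsilon)^2$ stays below $1$; this holds for small enough $\epsilon$ precisely because $0<\rho<1$, as you correctly note elsewhere in that paragraph.
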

In \cite[Prop. 24]{MR2810913} the cycle-pointing decomposition was used in order to provide a new method for determining the asymptotic number of free trees. This may be extended to the case of vertex degree restrictions. A detailed justification is given in Section~\ref{sec:them} below.
\begin{proposition}
	\label{SeFrpro:together}
	\label{SeFrpro:rho}
	\label{SeFrpro:ser3}
	\label{SeFrpro:small}
	The series $\tilde{\cF}_\Omega(z)$ and $\tilde{\cA}_{\Omega^*}(z)$ both have the same radius of convergence $\rho$. Moreover, the following statements hold.
	\begin{enumerate}[i)]
		\item There is a constant $d_{\Omega^*}'$ such that
		\[
		[z^n] \tilde{\cF}_\Omega(z) \sim d_{\Omega*}' \rho^{-n} n^{-5/2}
		\]
		as $n \equiv 2 \mod \gcd(\Omega^*)$ tends to infinity.
		\item For any set $\Lambda \subset \ndN$ the series
		\[
		F^\Lambda(z,w) = \bar{Z}_{\Set_\Lambda^\circledast}(w, \tilde{\cA}_{\Omega^*}^\circ(z); \tilde{\cA}_{\Omega^*}(z^2), \tilde{\cA}_{\Omega^*}^\circ(z^2); \tilde{\cA}_{\Omega^*}(z^3), \tilde{\cA}_{\Omega^*}^\circ(z^3); \ldots )
		\] satisfies $F^\Lambda(\rho + \epsilon, \tilde{\cA}_{\Omega^*}(\rho)+\epsilon) <0$ for some $\epsilon > 0$.
		\item The power series \[\bar{Z}_{\Set_{\{2\}}^\circledast  \circledcirc \cA_{\Omega^*}}(z) = \tilde{\cA}_{\Omega^*}^\circ(z^2)\] has radius of convergence greater than $\rho$.
	\end{enumerate}
\end{proposition}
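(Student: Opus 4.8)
\emph{Proof proposal.} The plan is to reduce all four assertions to the elementary identity $\tilde{\cF}_\Omega^\circ(z) = z\frac{\mathrm{d}}{\mathrm{d}z}\tilde{\cF}_\Omega(z)$ of Lemma~\ref{le:cyc}, applied to the cycle-pointing decomposition
\[
\cF_\Omega^\circ \simeq \cX^\circ \star (\Set_\Omega \circ \cA_{\Omega^*}) + \Set_{\{2\}}^\circledast  \circledcirc \cA_{\Omega^*} + (\Set_\Omega^\circledast \circledcirc \cA_{\Omega^*}) \star \cX .
\]
Translating the three summands into ordinary generating series via Table~\ref{tb:egs}, the identities $\bar Z_{\cS\star\cH}=\bar Z_\cS Z_\cH$ and the formula for $\bar Z_{\cS\circledcirc\cH}$, together with the elementary evaluations $\bar Z_{\cX^\circ}=t_1$, $\bar Z_{\Set_{\{2\}}^\circledast}=t_2$, and $\bar Z_{\Set_\Lambda^\circledast}=\sum_{\ell\ge 2}\ell t_\ell\,\partial_{s_\ell}Z_{\Set_\Lambda}$, one obtains
\[
\tilde{\cF}_\Omega^\circ(z) = E^\Omega\bigl(z,\tilde{\cA}_{\Omega^*}(z)\bigr) + \tilde{\cA}_{\Omega^*}^\circ(z^2) + z\,F^\Omega\bigl(z,\tilde{\cA}_{\Omega^*}(z)\bigr),
\]
with $E^\Lambda$, $F^\Lambda$ the series of Proposition~\ref{SeFrpro:cnt}(iii) and of part~(ii) above. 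In particular, since $\bar Z_{\Set_{\{2\}}^\circledast}$ depends only on the single variable $t_2$, the pointed-substitution formula gives at once $\bar Z_{\Set_{\{2\}}^\circledast\circledcirc\cA_{\Omega^*}}(z)=\tilde{\cA}_{\Omega^*}^\circ(z^2)$; as $\tilde{\cA}_{\Omega^*}^\circ(x)=x\frac{\mathrm d}{\mathrm dx}\tilde{\cA}_{\Omega^*}(x)$ has radius of convergence $\rho$, the function $\tilde{\cA}_{\Omega^*}^\circ(z^2)$ has radius of convergence $\rho^{1/2}>\rho$, since $\rho<1$ by Proposition~\ref{SeFrpro:cnt}(i). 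This settles part~(iii).

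For part~(ii) I would first record that, cycle index sums having nonnegative coefficients, $F^\Lambda$ takes nonnegative values at nonnegative arguments, so the inequality in (ii) cannot be meant literally as ``$<0$''; its substantive content is $F^\Lambda(\rho+\epsilon,\tilde{\cA}_{\Omega^*}(\rho)+\epsilon)<\infty$, i.e.\ that $F^\Lambda$ extends analytically past $z=\rho$, and that is what I prove. Since $\Set_\Lambda$ is a subspecies of $\Set$, we have $\bar Z_{\Set_\Lambda^\circledast}\le\bar Z_{\Set^\circledast}$ coefficientwise, and $\bar Z_{\Set^\circledast}(s_1,t_1;s_2,t_2;\dots)=\exp\!\bigl(\sum_{i\ge1}s_i/i\bigr)\sum_{\ell\ge2}t_\ell$, in which $t_1$ does not occur. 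Choose $\epsilon>0$ with $(\rho+\epsilon)^2<\rho$. Both $u\mapsto\tilde{\cA}_{\Omega^*}(u)/u$ and $u\mapsto\tilde{\cA}_{\Omega^*}^\circ(u)/u$ are nondecreasing on $[0,\rho)$ and finite at $u=(\rho+\epsilon)^2$, so $\sum_{i\ge2}\tilde{\cA}_{\Omega^*}\bigl((\rho+\epsilon)^i\bigr)/i$ and $\sum_{\ell\ge2}\tilde{\cA}_{\Omega^*}^\circ\bigl((\rho+\epsilon)^\ell\bigr)$ are each dominated by a convergent geometric series. Hence $\bar Z_{\Set^\circledast}$, and therefore $F^\Lambda$, is finite at the required point; the same estimate with $Z_\Set$ in place of $\bar Z_{\Set^\circledast}$ reproves Proposition~\ref{SeFrpro:cnt}(iii).

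It remains to prove the equality of radii and part~(i); here I would run the singularity analysis of \cite[Prop.~24]{MR2810913} adapted to degree restrictions. By parts~(ii)--(iii) the functions $E^\Omega$ and $F^\Omega$ are analytic on a bidisk $\{|z|<\rho+\epsilon\}\times\{|w|<\tilde{\cA}_{\Omega^*}(\rho)+\epsilon\}$, and $\tilde{\cA}_{\Omega^*}^\circ(z^2)$ is analytic for $|z|<\rho^{1/2}$; since $|\tilde{\cA}_{\Omega^*}(z)|\le\tilde{\cA}_{\Omega^*}(|z|)\le\tilde{\cA}_{\Omega^*}(\rho)$ for $|z|<\rho$, the displayed decomposition shows that $\tilde{\cF}_\Omega^\circ$ is analytic on $\{|z|<\rho\}$. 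By Proposition~\ref{SeFrpro:cnt}(ii) and the transfer theorem for periodic square-root singularities \cite{MR2483235}, near $z=\rho$ one has $\tilde{\cA}_{\Omega^*}(z)=\tilde{\cA}_{\Omega^*}(\rho)-c_1(1-z/\rho)^{1/2}+O(1-z/\rho)$ with $c_1>0$, and analogously at the finitely many conjugate dominant singularities $\rho\zeta$, $\zeta^{\gcd(\Omega^*)}=1$. Writing $G(z,w):=E^\Omega(z,w)+z F^\Omega(z,w)$, which is analytic in $w$, substitution yields
\[
\tilde{\cF}_\Omega^\circ(z)=G\bigl(\rho,\tilde{\cA}_{\Omega^*}(\rho)\bigr)-c_1\,\partial_w G\bigl(\rho,\tilde{\cA}_{\Omega^*}(\rho)\bigr)(1-z/\rho)^{1/2}+O(1-z/\rho),
\]
and the coefficient is nonzero: since $1\in\Omega$, the monomial $zw$ occurs in $E^\Omega(z,w)$, so $\partial_w G(\rho,\tilde{\cA}_{\Omega^*}(\rho))\ge\rho>0$. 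Hence $\tilde{\cF}_\Omega^\circ$ has a genuine square-root singularity exactly at $\rho$ (and its conjugates), so its radius of convergence, and therefore that of $\tilde{\cF}_\Omega$, equals $\rho$; the transfer theorem then gives $[z^n]\tilde{\cF}_\Omega^\circ(z)\sim c\,\rho^{-n}n^{-3/2}$ as $n\equiv 2\pmod{\gcd(\Omega^*)}$, and dividing by $n$ via $[z^n]\tilde{\cF}_\Omega(z)=\frac{1}{n}[z^n]\tilde{\cF}_\Omega^\circ(z)$ yields $[z^n]\tilde{\cF}_\Omega(z)\sim d'_{\Omega^*}\rho^{-n}n^{-5/2}$.

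The main obstacle is the periodicity bookkeeping in this last step: checking that the support of $\tilde{\cF}_\Omega$ is exactly $\{n\equiv 2\bmod\gcd(\Omega^*)\}$, enumerating all dominant singularities $\rho\zeta$ together with their singular expansions, and verifying that their contributions to $[z^n]$ add up to a nonzero constant rather than cancelling. This is precisely the content of \cite[Prop.~24]{MR2810913} in the unrestricted case and of the proof of Proposition~\ref{SeFrpro:cnt} in the rooted restricted case, so I would import those arguments, the only new ingredient being the decomposition of $\tilde{\cF}_\Omega^\circ$ displayed above together with parts~(ii)--(iii). A purely editorial point is to flag that ``$<0$'' in the statement of part~(ii) should be ``$<\infty$''.
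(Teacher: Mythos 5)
Your proposal is correct and follows essentially the same route as the paper: part (iii) via $\rho<1$, part (ii) by dominating $\bar{Z}_{\Set_\Lambda^\circledast}$ with $\bar{Z}_{\Set^\circledast}$ (noting $t_1$ is absent), and part (i) by singularity analysis of the decomposition $\tilde{\cF}_\Omega^\circ(z)=E^\Omega(z,\tilde{\cA}_{\Omega^*}(z))+\tilde{\cA}_{\Omega^*}^\circ(z^2)+zF^\Omega(z,\tilde{\cA}_{\Omega^*}(z))$ using the Janson--Stanley--Yeats expansion and the multiple-dominant-singularity transfer theorem. You also correctly flag that ``$<0$'' in (ii) is a typo for ``$<\infty$'' (which is what the paper actually proves), and your explicit check that the square-root coefficient is nonzero via the monomial $zw$ in $E^\Omega$ (from $1\in\Omega$) is a detail the paper leaves implicit.
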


\subsection{Approximation arguments}

We are going to treat the classes $\cS$, $\cE$, and $\cV$ separately.

\subsubsection{The class $\cE$ of symmetric cycle pointed trees whose center of symmetry is an edge}

The event $(\mT_n, \tau_n) \in \cE$ is so unlikely, that we will be able to neglect this case:

\begin{lemma}
	\label{le:small}
	There are constants $C,c>0$, such that for all $n$
	\[
	\Pr{(\mT_n, \tau_n) \in \cE} \le C \exp(-c n).
	\]
\end{lemma}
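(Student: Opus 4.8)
The plan is to estimate the probability that $(\mT_n, \tau_n)$ lands in $\cE = \Set_{\{2\}}^\circledast \circledcirc \cA_{\Omega^*}$ by comparing coefficients of generating series. Since $(\mT_n,\tau_n)$ is uniform among all unlabelled $\cF_\Omega^\circ$-objects of size $n$, we have
\[
\Pr{(\mT_n, \tau_n) \in \cE} = \frac{[z^n]\, \bar{Z}_{\cE}(z,z;z^2,z^2;\ldots)}{[z^n]\, \tilde{\cF}_\Omega^\circ(z)}.
\]
The denominator is controlled by Proposition~\ref{SeFrpro:together}: we have $[z^n]\tilde{\cF}_\Omega^\circ(z) = n [z^n]\tilde{\cF}_\Omega(z) \sim n \cdot d_{\Omega^*}' \rho^{-n} n^{-5/2}$, so it decays like $\rho^{-n} n^{-3/2}$ up to a constant. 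For the numerator, Proposition~\ref{SeFrpro:together}~iii) tells us that $\bar{Z}_{\cE}(z,z;z^2,z^2;\ldots) = \tilde{\cA}_{\Omega^*}^\circ(z^2)$ has radius of convergence strictly greater than $\rho$, say $\ge \rho'$ with $\rho' > \rho$. Hence its $n$-th coefficient is bounded by $C_1 (\rho'')^{-n}$ for any $\rho < \rho'' < \rho'$.

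Putting these together, the ratio is bounded by
\[
\Pr{(\mT_n, \tau_n) \in \cE} \le \frac{C_1 (\rho'')^{-n}}{c_2 \rho^{-n} n^{-3/2}} = \frac{C_1}{c_2} n^{3/2} (\rho/\rho'')^{n}.
\]
Since $\rho/\rho'' < 1$, this is bounded by $C\exp(-cn)$ for suitable constants $C, c > 0$ (absorbing the polynomial factor $n^{3/2}$ into the exponential by slightly shrinking $c$). One should be a little careful about the congruence condition $n \equiv 2 \bmod \gcd(\Omega^*)$ entering the asymptotics in Proposition~\ref{SeFrpro:together}~i), but this only affects the subsequence along which the sharp asymptotic holds; for the denominator a one-sided lower bound $[z^n]\tilde{\cF}_\Omega^\circ(z) \ge c_2 \rho^{-n} n^{-3/2}$ along the relevant congruence class is all that is needed, and that follows from the same singularity analysis.

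The only genuine obstacle is making sure the inputs from Proposition~\ref{SeFrpro:together} are applied correctly — in particular identifying $\bar{Z}_{\cE}$ evaluated at $(z,z;z^2,z^2;\ldots)$ with $\tilde{\cA}_{\Omega^*}^\circ(z^2)$ via Lemma~\ref{le:oneton2} and the structure of the pointed substitution, and confirming that its radius of convergence exceeds $\rho$ because $\tilde{\cA}_{\Omega^*}^\circ$ has radius of convergence $\rho < 1$ so that $\tilde{\cA}_{\Omega^*}^\circ(z^2)$ has radius $\sqrt{\rho} > \rho$. Everything else is a routine comparison of exponential growth rates, and no delicate estimate is required.
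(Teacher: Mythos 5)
Your proof is correct and follows essentially the same route as the paper: express $\Pr{(\mT_n,\tau_n)\in\cE}$ as the coefficient ratio $[z^n]\tilde{\cE}(z)/[z^n]\tilde{\cF}_\Omega^\circ(z)$, note that the numerator's series $\tilde{\cA}_{\Omega^*}^\circ(z^2)$ has radius of convergence $\sqrt{\rho}>\rho$ while the denominator grows like $\rho^{-n}n^{-3/2}$ along the relevant congruence class, and conclude exponential decay. Your write-up is in fact more explicit than the paper's (which compresses the final comparison into ``this yields the claim''), and your handling of the polynomial factor and the congruence condition is sound.
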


Geometrically speaking, this can be explained by the fact that any unlabelled cycle pointed tree from $\cE$ corresponds bijectively to a cycle pointed P\'olya tree from $\cA_{\Omega^*}^\circ$ having precisely half of its size. Compare with Figure~\ref{SeFrfi:typeb}. The number of such objects is roughly given by $\rho^{n/2}$, while the number of all cycle pointed trees in $\cF_\Omega^\circ$ is roughly given by $\rho^n$, which is exponentially larger.

\begin{figure}[h]
	\centering
	\begin{minipage}{1.0\textwidth}
		\centering
		\includegraphics[width=0.4\textwidth]{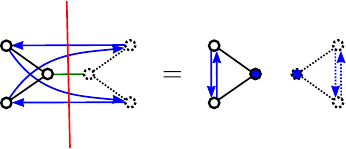}
		\caption{Any unlabelled $\cE = \Set_{\{2\}}^\circledast  \circledcirc \cA_{\Omega^*}$ object corresponds to two identical copies of a cycle-pointed P\'olya tree.}
		\label{SeFrfi:typeb}
	\end{minipage}
\end{figure}

\subsubsection{The class $\cS$ of cycle pointed trees with a marked fixpoint}


\begin{lemma}
	\label{SeFrle:sobj}
	Let $\mS_n$ be drawn uniformly at random from the unlabelled \[\cS = \cX^\circ \star (\Set_\Omega \circ \cA_{\Omega^*})\] objects of size $n$. Then the following properties hold.
	\begin{enumerate}[\qquad a)]
		\item There are constants $C,c > 0$ such that for all $n$ and $x \ge 0$ it holds that \[\Pr{\Di(\mT_n) \ge x} \le C \exp(-c x^2/n).\] 
		\item There is a random number $K_n = n + O_p(1) \le n$ and a coupling of $\mS_n$ with a partition into two rooted subtrees $\mB_n$, $\mC_n$ that intersect only in their roots and satisfy $\mC_n \eqdist \mA_{K_n}$. 
	\end{enumerate}
\end{lemma}

The reason for this is, that each unlabelled $\cS = \cX^\circ \star (\Set_\Omega \circ \cA_{\Omega^*})$ cycle pointed trees corresponds bijectively to  a P\'olya tree, in which each vertex degree must lie in $\Omega$. That is, the outdegree of the root lies in $\Omega$, and the outdegrees of all remaining vertices lie in $\Omega^*$. Compare with Figure~\ref{SeFrfi:typea}.

\begin{figure}[h]
	\centering
	\begin{minipage}{1.0\textwidth}
		\centering
		\includegraphics[width=0.4\textwidth]{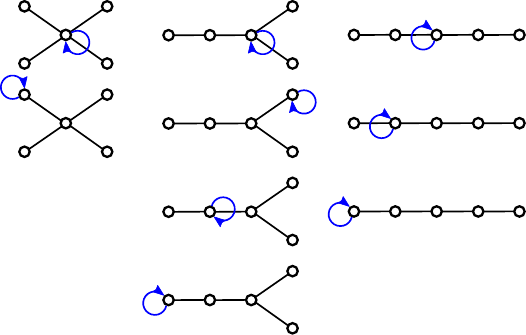}
		\caption{Unlabelled $\cS = \cX^\circ \star (\Set_\Omega \circ \cA_{\Omega^*})$ cycle pointed trees correspond to P\'olya trees, in which each vertex degree must lie in $\Omega$.}
		\label{SeFrfi:typea}
	\end{minipage}
\end{figure}

\subsubsection{The class $\cV$ of symmetric cycle pointed trees whose center of symmetry is a vertex}

\begin{lemma}
	\label{SeFrle:vobj}
	Let $\mV_n$ be drawn uniformly from the unlabelled \[\cV = (\Set_\Omega^\circledast \circledcirc \cA_{\Omega^*}) \star \cX\] objects of size $n$. Then the following statements hold.
	\begin{enumerate}[\qquad a)]
	\item There are constants $C,c>0$ such that for all $x\ge 0$ and $n$ we have the tail bound \[\Pr{\Di(\mV_n) \ge x} \le C \exp(-c x^2/n).\] 
	\item There is a random number $K_n = n + O_p(1) \le n$ and a coupling of $\mV_n$ with a partition into two rooted subtrees $\mB_n$, $\mC_n$ that intersect only in their roots and satisfy $\mC_n \eqdist \mA_{K_n}$. 
	\end{enumerate}
\end{lemma}

The key point is that any unlabelled cycle pointed tree from $\cV$ corresponds to a P\'olya tree $A$ from $\cA_{\Omega^*}$ where each non-root vertex must have outdegrees in $\Omega^*$, together with a number $K$ of identical copies of a symmetrically cycle pointed P\'olya tree $A^\circ$ from $A^\circledast_{\Omega^*}$, such that the sum of the root degrees of $A$ and the $K$ copies of $A^\circ$ lies in $\Omega$. Compare with Figure~\ref{SeFrfi:typec}.

\begin{figure}[h]
	\centering
	\begin{minipage}{1.0\textwidth}
		\centering
		\includegraphics[width=0.55\textwidth]{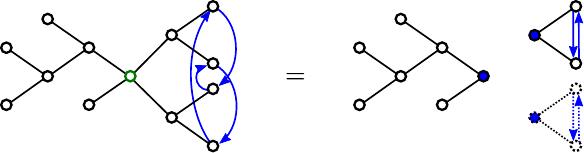}
		\caption{Decomposition of an unlabelled $\cV = (\Set_\Omega^\circledast \circledcirc \cA_{\Omega^*}) \star \cX$ object into a P\'olya tree and a number of identical copies of a symmetrically cycle-pointed P\'olya tree.}
		\label{SeFrfi:typec}
	\end{minipage}
\end{figure}

\subsection{Proof of the main results: Theorems~\ref{SeFrte:main2}, \ref{te:locconv}, and \ref{te:appr}}

Having these results at hand, we may deduce the scaling limit, the Benjamini--Schramm limit and the tail-bound for the diameter for the random unlabelled tree $\mT_n$ by building on the corresponding results for the random P\'olya tree $\mA_{n-1}$.

\begin{proof}[Proof of Theorem~\ref{te:appr}]
	Lemma \ref{le:small} implies that the total variation distance between the unrooted tree $\mT_n$ and a mixture of random $\cS$ and $\cV$ structures is exponentially small. Lemmas \ref{SeFrle:sobj} and \ref{SeFrle:vobj} imply that both $\mS_n$ and $\mV_n$  look like a large randomly sized P\'olya tree with a stochastically bounded rest. Consequently their mixture looks like a large randomly sized P\'olya tree with a small rest which is a mixture of the two stochastically bounded small trees corresponding to $\mS_n$ and $\mV_n$. This completes the proof.
\end{proof}

\begin{proof}[Proof  of Theorem \ref{te:locconv}]
	Theorem~\ref{te:appr} implies that it suffices to study the tree $\bar{\mT}_n$. 
	For the local limit, let $u_n$ denote a uniformly at random drawn vertex of the tree $\bar{\mT}_n$, and let $k_n = o(\sqrt{n})$ denote a given sequence. It is clear that the random vertex $u_n$ lies with high probability in the subtree $\mA_{K_n}$, and that conditioned on this event it is uniformly distributed among its vertices. Note that $K_n = n + O_p(1)$ implies that with high probability $K_n \ge n - \log n \to \infty$ and $k_n= o(\sqrt{n}) = o(\sqrt{K_n})$. By Equation~\eqref{eq:localconv} and $K_n \to \infty$ it follows that the radius $k_n$ neighbourhood of a random vertex in $\mA_{K_n}$ is close in total variation to the $k_n$ neighbourhood of the infinite random tree $\hat{\mA}_{\Omega^*}$, and that a random vertex in $\mA_{K_n}$ has with high probability height strictly larger than $k_n$. In particular, with high probability the neighbourhood does not contain the root-vertex of $\mA_{K_n}$ and is hence not influenced by the small tree $\mB_n$ that gets attached to the root of $\mA_{K_n}$ to form the tree $\bar{\mT}_n$. This readily verifies that
	\[
		d_{\textsc{TV}}( V_{k_n}(\bar{\mT}_n, u_n), V_{k_n}(\hat{\mA}_{\Omega^*})) \to 0,
	\]
	and hence completes the proof.
\end{proof}

\begin{proof}[Proof of Theorem \ref{SeFrte:main2}]
	For the scaling limit, it suffices by Theorem~\ref{te:appr} to consider the tree $\bar{\mT}_n$. As $|\mB| = O_p(1)$ it follows that with high probability it holds that, say, $|\mB| \le n^{1/4}$. Hence
	it holds that 
	\begin{align}
		\label{eq:max}
		d_{\textsc{GH}}( \bar{\mT}_n / \sqrt{n}, \mA_{K_n}/ \sqrt{n}) \convp 0.
	\end{align}
	Note that $\mK_n \convdis \infty$ and Equation~\eqref{eq:convpolya} imply that
	\begin{align}
		\label{eq:moritz}
		c_{\Omega^*} \mA_{K_n} / \sqrt{K_n} \convdis \CRT.
	\end{align}
	In particular, $\Di(\mA_{K_n}) = O_p(\sqrt{K_n})$ and hence
	\[
		d_{\textsc{GH}}(  \mA_{K_n}/ \sqrt{n}, \mA_{K_n} / \sqrt{K_n}) \le O_p(1)(1 - \sqrt{K_n / n}) \convp 0.
	\]
	Together with Equation~\eqref{eq:max} this implies that
	\[
		d_{\textsc{GH}}( \bar{\mT}_n / \sqrt{n}, \mA_{K_n} / \sqrt{K_n}) \convp 0
	\]
	and by the limit in \eqref{eq:moritz} the scaling limit for $\bar{\mT}_n$ follows. The inclined reader may note that the arguments above work just as fine for the Gromov--Hausdorff--Prokhorov metric with respect to the uniform measure on the leaves or all vertices.
	
	For the tail bound of the diameter, note that it suffices to show such a bound for $\Pr{\Di(\mT_n) \ge x}$ when $ x \le n$. By Lemmas~\ref{le:small}, \ref{SeFrle:sobj} and \ref{SeFrle:vobj} it follows that there are constants $C_i, c_i >0$, for $i=1,2,3$, such that
	\begin{align*}
	\Pr{\Di(\mT_n) \ge x} &\le \sum_{\cB \in \{\cE, \cS, \cV \}} \Pr{\Di(\mT_n) \ge x \mid (\mT_n, \tau_n) \in \cB}\,\, \Pr{(\mT_n, \tau_n) \in \cB} \\
	&\le C_1 \exp(-c_1 n) + \sum_{i=2}^{3} C_i \exp(-c_i x^2 / n).
	\end{align*}
	As we assumed that $x \le n$, it holds that
	\[
	\exp(-c_1 n) \le \exp(-c_1 x^2 /n).
	\]
	Hence for a suitable choice of constants $C,c>0$, it follows that
	\[
	\Pr{\Di(\mT_n) \ge x} \le C \exp(-c x^2 /n)
	\]
	for all $n$ and $x \ge 0$.
\end{proof}


\subsection{Proof of the enumerative observation Proposition~\ref{SeFrpro:small}  }
\label{sec:them}

\begin{proof}[Proof of Proposition~\ref{SeFrpro:small}]
	Let $\rho$ denote the radius of convergence of $\tilde{\cA}_{\Omega^*}(z)$.
	Claim iii) follows from the fact that  $\rho < 1$ and that the series \[\tilde{\cA}_{\Omega^*}^\circ(z) = z \frac{\text{d}}{\text{d}z} \tilde{\cA}_{\Omega^*}(z)\] also has radius of convergence~$\rho$. We proceed with claim ii). The series $\bar{Z}_{\Set^\circledast_\Lambda}$ is dominated coefficient-wise by the series
	\[
	\bar{Z}_{\Set^\circledast}(s_1, t_1; s_2, t_2; \ldots) = \exp \left(\sum_{k=1}^\infty s_k / k\right) \sum_{i=2}^\infty t_i
	\]
	and hence $F^\Lambda(z,w)$ is dominated by \[\exp \left(w + \sum_{k=2}^\infty \tilde{\cA}_{\Omega^*}(z^k) / k \right) \sum_{i=2}^\infty \tilde{\cA}_{\Omega^*}^\circ(z^i).\] Since $\rho < 1$ this series is finite for $z= \rho + \epsilon$ and $w = \tilde{\cA}_{\Omega^*}(\rho) + \epsilon$ if $\epsilon>0$ is sufficiently small.  In order prove claim i) we are going to perform a singularity analysis of the series $\tilde{\cF}^\circ_{\Omega}(z)$. The cycle pointing decomposition 
	\[
	\cF_\Omega^\circ \simeq \cX^\circ \star (\Set_\Omega \circ \cA_{\Omega^*}) + \Set_{\{2\}}^\circledast  \circledcirc \cA_{\Omega^*} + (\Set_\Omega^\circledast \circledcirc \cA_{\Omega^*}) \star \cX
	\]
	yields that the series
	$
	\tilde{\cF}_\Omega^\circ(z) = z \frac{\text{d}}{\text{d}z}\tilde{\cF}_\Omega(z)
	$ can be written in the form \[\tilde{\cF}_\Omega^\circ(z) = z h(z, \tilde{\cA}_{\Omega^*}(z))\] with
	\[
	h(z,w) = E^{\Omega}(z,w) + F^\Omega(z,w) + \tilde{\cA}^\circ_{\Omega^*}(z^2)/z. 
	\]
	Here we let $E^\Omega$ be defined as in Proposition~\ref{SeFrpro:cnt}.
	Set $d = \gcd(\Omega^*)$. We have that $\tilde{\cA}_{\Omega^*}(z)$ satisfies the prerequisites of the type of power series studied in Jason, Stanley and Yeats \cite[Thm. 28]{MR2240769}: Its dominant singularities (all of square-root type) are given by the rotated points \[U = \{\omega^k \rho \mid k=0, \ldots, d-1\}\] with \[\omega = e^{\frac{2 \pi i}{d}}.\] Moreover \[\tilde{\cA}_{\Omega^*}(\omega z) = \omega \tilde{\cA}_{\Omega^*}(z)\] for all $z$ in a generalized $\Delta$-region with wedges removed at the points of $U$. 
	We have that $h(z,w)$ is a power series with non-negative coefficients and by claim i) and ii) and Proposition~\ref{SeFrpro:cnt} we have \[h(\tilde{\cA}_{\Omega^*}(\rho) + \epsilon, \rho + \epsilon) < \infty\] for some $\epsilon > 0$. Hence the dominant singularities and their types are driven by the series $\tilde{\cA}_{\Omega^*}(z)$. We may apply a standard result for the singularity analysis of functions with multiple dominant singularities \cite[Thm. VI.5]{MR2483235}  and obtain that 
	\begin{align}
	\label{eq:identical}
	[z^{m}]h(z, \tilde{\cA}_{\Omega^*}(z)) \sim d_{\Omega^*}' m^{-3/2} \rho^{-m}
	\end{align} for $m \equiv 1 \mod \gcd(\Omega^*)$ and $d_{\Omega^*}'>0$ a constant. 
\end{proof}

\subsection{Proofs of the approximation arguments: Lemmas~\ref{le:small}, \ref{SeFrle:sobj}, and \ref{SeFrle:vobj}}

\subsubsection{Cycle pointed trees whose cycle center is an edge}

\begin{proof}[Proof of Lemma~\ref{le:small}]
	The probability for this event is given by the ratio of unlabelled cycle pointed trees of $\cE$ with $n$ vertices, and the unlabelled cycle pointed trees in $\cF_\Omega$ with $n$ vertices. Hence
	\[\Pr{(\mT_n, \tau_n) \in \cE} = \frac{[z^n] \tilde{\cE}(z)}{[z^n] \tilde{\cF}^\circ(z)}.\]
	By Proposition~\ref{SeFrpro:rho}, $iii)$, the radius of convergence of the ordinary generating series $\tilde{\cE}(z)$ is strictly larger than the radius of convergence $\rho$ of $\tilde{\cF}^\circ(z)$. This yields the claim.
\end{proof}

\subsubsection{Cycle pointed trees whose cycle center is a fixpoint}

	It holds  that \[\cS = \cX^\circ \star (\Set_\Omega \circ \cA_{\Omega^*}) \simeq \cX \cdot(\Set_\Omega \circ \cA_{\Omega^*}),\] hence we do not require cycle pointing techniques in this case. Let $(\mS_n, \sigma)$ be drawn uniformly at random from the set $\Sym(\cS)[n]$. Let $\pi_n$ denote the corresponding partition. By the discussion in Section~\ref{SeFrsec:oponsp}, $\sigma$ induces an automorphism \[\bar{\sigma}: \pi_n \to \pi_n\] of the $\Set_\Omega$-object. Moreover, let $F_n \subset \pi_n$ denote the fixpoints of $\bar{\sigma}$, $f_n = |F_n|$ their number and for each fixpoint $Q \in F_n$ let $(\mA_Q, \sigma_Q)$ denote the corresponding symmetry from $\Sym(\cA_{\Omega^*})[Q]$. Let $H_n$ denote the total size of the trees dangling from cycles with length at least $2$. We are going to make the following observations.
	
	\begin{lemma}
		\label{le:pre11}
		The following statements hold.
	\begin{enumerate}[1)]
		\item There are constants $C_1>0$ and $0<\gamma<1$ such that for all $n$ and $x \ge 0$ we have that \[\Pr{H_n \ge x} \le C_1 n^{3/2} \gamma^x\] and \[\Pr{f_n \ge x} \le C_1 n^{3/2} \gamma^x.\]
		\item The maximum size  of the individual trees corresponding to the fixpoints of $\bar{\sigma}$ satisfies \[
		\max_{Q \in F_n}|\mA_Q| = n + O_p(1).
		\]
		\item There is a constant $C_2>0$ such that \[\Ex{f_n} \le C_2\] for all $n$.
	\end{enumerate}
	\end{lemma} 
	This is sufficient to prove Lemma~\ref{SeFrle:sobj}:
	
	\begin{proof}[Proof of Lemma \ref{SeFrle:sobj}]
	We start with claim a),  the tail bound for the diameter. First, it suffices to show such a bound for all $\sqrt{n} \le x \le n$. If $\Di(\mS_n) \ge x$, then we have $H_n \ge x/2$ or $\max_{Q \in F_n} \He(\mA_Q) \ge x/2 -1$. By 1), we have \[\Pr{H_n \ge x/2} \le C_1 n^{3/2}\gamma^{x/2}\] and  there are constants $C_4, c_4 > 0$ such that \[C_1 n^{3/2}\gamma^{x/2} \le C_4 \exp(-c_4 x^2/n)\] for all $n$ and $\sqrt{n} \le x \le n$. Let $\mathfrak{E}_n$ denote the event $\max_Q \He(\mA_Q) \ge x/2 -1$. It holds that
	\[
	\Pr{\mathfrak{E}_n} \le \sum_{F} \Pr{F_n = F} \Pr{\mathfrak{E}_n \mid F_n = F}.
	\]
	with $F$ ranging over all subsets of partitions of $[n]$ with $\Pr{F_n = F}>0$. By the discussion of symmetries in Section~\ref{SeFrsec:oponsp} we have that given $F_n = F$, the symmetries $(\mA_Q, \sigma_Q)_{Q \in F}$ are independent and for each $Q \in F$ we have that $(\mA_Q, \sigma_Q)$ gets drawn uniformly at random from the set $\Sym(\cA_{\Omega^*})[Q]$. That is, $\mA_Q$ gets drawn uniformly at random from all unlabelled P\'olya trees with outdegrees in the set $\Omega^*$. By Inequality~\eqref{SeFrle:tailrooted} it follows that there are positive constants $C_5, c_5$ such that uniformly for all $n$ and $x$
	\[
	\Pr{\mathfrak{E}_n \mid F_n = F} \le C_5 \sum_{Q \in F} \exp(-c_4 x^2/|Q|) \le |F| C_4 \exp(-c_5 x^2/n).
	\]
	It follows that \[\Pr{\cE_n} \le C_5 \exp(-c_5 x^2/n) \sum_{F} \Pr{F_n = F} |F| \le \Ex{f_n} C_5 \exp(-c_5 x^2/n).\] By 3) we have that \[\Ex{f_n} \le C_2\] for all $n$. Thus, for some $C_6, c_6 > 0$, it holds that
	\[
	\Pr{\Di(\mS_n) \ge x} \le C_4 \exp(-c_4 x^2/n) + C_2 C_5\exp(-c_5 x^2/n) \le C_6 \exp(-c_6 x^2/n)
	\]
	uniformly for all $n$ and $\sqrt{n} \le x \le n$. Thus the claims 1) and 3) of Lemma~\ref{le:pre11} imply the tail bound for the diameter. 
	
	We continue with claim b), the approximation argument. Select one of the partition classes from $F_n$ with maximal size uniformly at random and let $\mX_n$ denote the corresponding tree. Note that by the substitution rule for Boltzmann distributions discussed in Section~\ref{sec:pobosa} it holds for all $\ell$ that 
	\begin{align}
	\label{eq:clear}
	(\mX_n \mid |\mX_n| = \ell) \eqdist \mA_\ell.
	\end{align}
	Thus, setting $K_n = |X|_n$, it holds that $\mX_n \eqdist \mA_{K_n}$.
	By claim 2) of Lemma~\ref{le:pre11} we have $|K_n| = n + O_p(1)$, hence the remainder that gets attached to the root of $\mX_n$ to form the tree $\mS_n$ is stochastically bounded. This completes the proof 
	\end{proof}
	
	It remains to verify Lemma~\ref{le:pre11}.
		
	\begin{proof}[Proof of Lemma~\ref{le:pre11}]
	 We start with the first claim. By the discussion of Boltzmann samplers in Section~\ref{sec:pobosa} regarding the product and substitution operation, the probability generating function of $H_n$ is given by 
	\begin{align}
	\label{eq:expr1}
	\Ex{w^{H_n}} = \frac{[z^{n-1}]Z_{\Set_{\Omega}}(\tilde{\cA}_{\Omega^*}(\rho z), \tilde{\cA}_{\Omega^*}((\rho wz)^2), \tilde{\cA}_{\Omega^*}((\rho wz)^3), \ldots)}{[z^{n-1}]Z_{\Set_{\Omega}}(\tilde{\cA}_{\Omega^*}(\rho z), \tilde{\cA}_{\Omega^*}((\rho z)^2), \ldots)}.
	\end{align}
	Let us explain this argument in more detail. By the product rule it suffices, to study $(n-1)$-sized symmetries of $\Set_\Omega \circ \cA_{\Omega^*}$. The substitution rule tells us that a Boltzmann distributed symmetry of this composition with parameters $(\rho^i)_{i \ge 1}$  is obtained by first drawing a P\'olya--Boltzmann distributed $\Set_\Omega$-symmetry with parameters $(\tilde{\cA}_{\Omega^*}(\rho^i))_{i \ge 1}$, and then for each $j \ge 1$ and each $j$-cycle of the symmetry an unlabelled Boltzmann distributed symmetry of $\cA_{\Omega^*}$ with parameters $(\rho^{ij})_{i \ge 1}$, of which $i$ identical copies are attached to the $\Set_\Omega$-symmetry. Given a $k \in \Omega$ sized permutation $\nu$, the probability for the $\Set_\Omega$-symmetry to assume this permutation is given by
	\begin{align}
		\label{eq:ex1}
		\frac{\tilde{\cA}_{\Omega^*}(\rho)^{\nu_1} \cdots  \tilde{\cA}_{\Omega^*}(\rho^{k})^{\nu_{k}}}{k! \widetilde{\Set_{\Omega} \circ \cA_{\Omega^*}}(\rho) }
	\end{align}
	Conditioned on this event, the  probability generating function for the size of the resulting object is given by 
	\begin{align}
		\label{eq:ex2}
		\left( \frac{ \tilde{\cA}_{\Omega^*}(\rho z)}{\tilde{\cA}_{\Omega^*}(\rho)} \right)^{\nu_1}  \left( \frac{ \tilde{\cA}_{\Omega^*}((\rho z)^2)}{\tilde{\cA}_{\Omega^*}(\rho^2)} \right)^{\nu_2} \cdots \left( \frac{ \tilde{\cA}_{\Omega^*}( (\rho z)^{k} )}{\tilde{\cA}_{\Omega^*}(\rho^{k})} \right)^{\nu_{k}}.
	\end{align}
	The exponents in the arguments are due to the fact that we attach $i$ identical copies of each tree corresponding to an $i$-cycle. If we additionally want to keep track of the volume of the trees corresponding to cycles with length at least $2$, we may form the corresponding bivariate probability generating function where $w$ corresponds to this parameter and $z$ to the total size by
	\begin{align}
\label{eq:ex3}
\left( \frac{ \tilde{\cA}_{\Omega^*}(\rho z)}{\tilde{\cA}_{\Omega^*}(\rho)} \right)^{\nu_1}  \left( \frac{ \tilde{\cA}_{\Omega^*}((\rho wz)^2)}{\tilde{\cA}_{\Omega^*}(\rho^2)} \right)^{\nu_2} \cdots \left( \frac{ \tilde{\cA}_{\Omega^*}( (\rho wz)^{k} )}{\tilde{\cA}_{\Omega^*}(\rho^{k})} \right)^{\nu_{k}}.
\end{align}
	Multiplying \eqref{eq:ex1} and \eqref{eq:ex3} and summing over all outcomes that correspond to objects with size $n-1$ yields
	\begin{align}
		\label{ex:4}
		 \frac{[z^{n-1}]Z_{\Set_{\Omega}}(\tilde{\cA}_{\Omega^*}(\rho z), \tilde{\cA}_{\Omega^*}((\rho wz)^2), \tilde{\cA}_{\Omega^*}((\rho wz)^3), \ldots)}{\widetilde{\Set_{\Omega} \circ \cA_{\Omega^*}}(\rho) }.
	\end{align}
	Likewise multiplying \eqref{eq:ex1} with \eqref{eq:ex2} and summing up in the same way yields
	\begin{align}
		\label{ex:5}
		\frac{ [z^{n-1}]Z_{\Set_{\Omega}}(\tilde{\cA}_{\Omega^*}(\rho z), \tilde{\cA}_{\Omega^*}((\rho z)^2), \ldots)}{\widetilde{\Set_{\Omega} \circ \cA_{\Omega^*}}(\rho) }.
	\end{align}
	The quotient of \eqref{ex:4} and \eqref{ex:5} is the probability generating function for the random number $H_n$, and the expression obtained in this way agrees with Equation~\eqref{eq:expr1}.

	Having verified Equation~\eqref{eq:expr1} we proceed with the argument. Since $1 \in \Omega$ we may bound the denominator in~\eqref{eq:expr1} from below by 
	$[z^{n-1}]\tilde{\cA}_{\Omega^*}(\rho z)$, and by Proposition~\ref{SeFrpro:cnt} we have that \begin{align}
	\label{eq:asy}
	[z^{n-1}]\tilde{\cA}_{\Omega^*}(\rho z) \sim C n^{-3/2}
	\end{align} for some constant $C>0$ as $n \equiv 2 \mod \gcd(\Omega^*)$ tends to infinity. Moreover, for all $n$ the polynomial in the indeterminate $w$ in the numerator
	is dominated coefficient wise by the series \[Z_{\Set_{\Omega}}(\tilde{\cA}_{\Omega^*}(\rho), \tilde{\cA}_{\Omega^*}((\rho w)^2), \ldots)\] which by Proposition~\ref{SeFrpro:cnt} has radius of convergence strictly greater than $1$. In particular we have that \[\sum_{k \ge x} [w^k]Z_{\Set_{\Omega}}(\tilde{\cA}_{\Omega^*}(\rho), \tilde{\cA}_{\Omega^*}((\rho w)^2), \ldots) = O(\gamma^x)\] for some constant $0 < \gamma < 1$. Hence there is a constant $C'$ such that \[\Pr{H_n \ge x} \le C' n^{3/2} \gamma^x
	\] for all $n$ and $x$. By the discussion of Boltzmann samplers in Section~\ref{sec:pobosa} regarding the product and substitution operation, the probability generating function for the random number $f_n$ is given by
	\[
	\Ex{w^{f_n}} = \frac{[z^{n-1}]Z_{\Set_{\Omega}}(w \tilde{\cA}_{\Omega^*}(\rho z), \tilde{\cA}_{\Omega^*}((\rho z)^2), \ldots)}{[z^{n-1}]Z_{\Set_{\Omega}}(\tilde{\cA}_{\Omega^*}(\rho z), \tilde{\cA}_{\Omega^*}((\rho z)^2), \ldots)}.
	\]
	The corresponding bound for the event $f_n \ge x$ follows by the same arguments as for the parameter~$H_n$. This proves claim~1).
	
	We proceed with showing claim 2). If $\Omega = \ndN$, then we may apply Lemma~\ref{le:part} to obtain that the largest component in a random $(n-1)$-sized multiset of unlabelled $\cA_{\Omega^*}$-objects has size $n + O_p(1)$. By claim $1)$ it follows that with high probability $H_n \le \log^2 n$. Thus the largest component  must correspond to a fixpoint, verifying claim $2)$ for this special case. In order to treat the general case, it suffices by similar arguments to show that the largest component in a random $(n-1)$-sized  unlabelled $\Set_\Omega \circ \cA_{\Omega^*}$-object has size $n + O_p(1)$. However, we cannot apply Lemma~\ref{le:part} directly, and hence argue as follows. 
	

	We need to show that for any sequence $t_n \to \infty$ the probability for all components in the random $\Set_\Omega \circ \cA_{\Omega^*}$-object to have size at most $n - t_n$ tends to zero. Using analogous arguments as in the justification of Equation~\eqref{eq:expr1}, we may express this probability by the product of the normalizing factor
	\begin{align}
	\label{eq:yo}
	([z^{n-1}]Z_{\Set_{\Omega}}(\tilde{\cA}_{\Omega^*}(\rho z), \tilde{\cA}_{\Omega^*}((\rho z)^2), \ldots))^{-1}
	\end{align}
	with the expression
	\begin{align}
		\label{eq:big}
		\sum_\nu \sum_{(a_{ij})_{i,j}} \left[ z^{n-1}\prod_{i,j} x_{ij}^{a_{ij}} \right] \prod_{i,j} \tilde{\cA}_{\Omega^*}( (\rho z)^i x_{ij}).
	\end{align}
	Here the sum index $\nu$ ranges over all permutations of sets of the form $[k]$ for $k \in  \Omega$. The indices $(a_{ij})_{i,j}$ range over all families of numbers $a_{ij}$ with $1 \le i \le n-1$, $1 \le j \le \nu_i$, and such that $a_{ij} \le n - t_n$ for all $i,j$ and 
	\[
		\sum_{\substack{1 \le i \le n-1 \\ 1 \le j \le \nu_i}} ia_{ij} = n-1.
	\]
	The indices $i,j$ of the product range over all pairs of integers with $1 \le i \le n-1$ and $1 \le j \le \nu_i$.
	
	Applying a standard result for the singularity analysis of functions with multiple dominant singularities \cite[Thm. VI.5]{MR2483235} we obtain similarly as in Equation~\eqref{eq:identical} that the factor in Equation~\eqref{eq:yo} is asymptotically equivalent to $n^{3/2}$ times a constant. Thus, showing that the largest component in a random unlabelled $n-1$-sized $\Set_\Omega \circ \cA_{\Omega^*}$-object has size $n + O_p(1)$ is actually equivalent to showing that the expression in \eqref{eq:big} multiplied by $n^{3/2}$ tends to zero as $n \equiv 2 \mod \gcd(\Omega^*)$ becomes large. Consider the species $\bar{\cA}_{\Omega^*}$ where for each  $k \in \ndN_0$ we set $\bar{\cA}_{\Omega^*}[k] = \cA_{\Omega^*}[k - \ell]$  for the smallest integer $\ell \ge 0$ satisfying $k - \ell \in \Omega^*$. Hence $\cA_{\Omega^*}$ is a subspecies of $\bar{\cA}_{\Omega^*}$, and $ \tilde{\bar{\cA}}_{\Omega^*}(z)$ has the same radius of convergence as $\tilde{\cA}_{\Omega^*}(z)$. 
	
	We may apply Lemma~\ref{le:part} to the composition $\Set \circ \bar{\cA}_{\Omega^*}$, yielding that the expression obtained from \eqref{eq:big} by letting $\nu$ range over arbitrarily sized permutations and replacing $\tilde{\cA}_{\Omega^*}(\cdot)$ with $\tilde{\bar{\cA}}_{\Omega^*}(\cdot)$  belongs to the class $o(n^{-3/2})$ of sequences that still tend to zero when multiplied by $n^{3/2}$. But this expression is clearly an upper bound to the expression in \eqref{eq:big}, yielding that \eqref{eq:big} also belongs to $o(n^{-3/2})$. Hence the largest component in a random $(n-1)$-sized unlabelled $\Set_{\Omega} \circ \cA_{\Omega}$ object has size $n + O_p(1)$. This verifies claim 2).

	It remains to prove claim 3), i.e. we have to show that $\Ex{f_n} = O(1)$. If $\Omega \subset \ndN$ is bounded, then this is trivial. Otherwise it seems to require some work. We have that
	\[
	\Ex{f_n} = \frac{[z^{n-1}]\left(s_1 \frac{\partial Z_{\Set_\Omega}}{\partial s_1} \right)(\tilde{\cA}_{\Omega^*}(z),\tilde{\cA}_{\Omega^*}(z^2), \ldots)}{[z^{n-1}] Z_{\Set_\Omega}(\tilde{\cA}_{\Omega^*}(z),\tilde{\cA}_{\Omega^*}(z^2), \ldots)}.
	\]
	Since $1 \in \Omega$ the denominator is bounded from below by $[z^{n-1}]\tilde{\cA}_{\Omega^*}(z)$. By Proposition~\ref{SeFrpro:cnt} it follows that
	\[([z^{n-1}]\tilde{\cA}_{\Omega^*}(z))^{-1} = O(n^{3/2} \rho^n).\] The power series in $z$ in the numerator is bounded coefficient wise by 
	\[
	\left(s_1 \frac{\partial Z_{\Set}}{\partial s_1} \right)(\tilde{\cA}_{\Omega^*}(z),\tilde{\cA}_{\Omega^*}(z^2), \ldots) = \tilde{\cA}_{\Omega^*}(z) \exp \left(\sum_{i=1}^\infty \tilde{\cA}_{\Omega^*}(z^i)/i\right) = h(\tilde{\cA}_{\Omega^*}(z))g(z)
	\]
	with \[h(w) = w \exp(w)\] being analytic on $\ndC$ and \[g(w) = \exp\left(\sum_{i \ge 2} \tilde{\cA}_{\Omega^*}(z^i)/i\right)\] having radius of convergence strictly larger than $\rho$ since $\rho < 1$. By a singularity analysis using results from \cite{MR2240769} and \cite[Thm. VI.5]{MR2483235} it follows that \[[z^{n-1}]h(\tilde{\cA}_{\Omega^*}(z))g(z) = O(n^{-3/2} \rho^{-n}).\] The detailed arguments are identical as in the proof of Proposition~\ref{SeFrpro:together}. This concludes the proof.
\end{proof}

\subsubsection{Symmetrically cycle pointed trees whose cycle center is a vertex}
Recall that
\[
	\cV = (\Set_{\Omega}^\circledast \circledcirc \cA_{\Omega^*}) \star \cX.
\]
Let $(\mV_n, \tau_n, \sigma, v_n)$ be a rooted c-symmetry drawn uniformly at random from the set $\RSym(\cV)[n]$. In particular, $\mV_n$ is distributed like the uniformly at random chosen unlabelled $\cV$-object with size~$n$. Let $\pi_n$ denote the corresponding partition.  By the discussion in Section~\ref{SeFrsec:oponsp}, $\sigma$ induces an automorphism \[\bar{\sigma}: \pi_n \to \pi_n\] of the $\Set_\Omega$-object. Moreover, let $F_n \subset \pi_n$ denote the fixpoints of $\bar{\sigma}$, $f_n = |F_n|$ their number and for each fixpoint $Q \in F_n$ let $(\mA_Q, \sigma_Q)$ denote the corresponding symmetry from $\Sym(\cA_{\Omega^*})[Q]$. Let $H_n$ denote the total size of the trees dangling from cycles with length at least $2$. We are going to make the following observations.

\begin{lemma}
	\label{le:reduct}
	The following statements hold.
	\begin{enumerate}[1)]
		\item There are constants $C_1>0$ and $0<\gamma<1$ such that for all $n$ and $x \ge 0$ we have that \[\Pr{H_n \ge x} \le C_1 n^{3/2} \gamma^x\] and \[\Pr{f_n \ge x} \le C_1 n^{3/2} \gamma^x.\]
		\item The maximum size  of the trees corresponding to the fixpoints of $\bar{\sigma}$ satisfies \[  
		\max_{Q \in F_n}|\mA_Q| = n + O_p(1).
		\]
		\item There is a constant $C_2>0$ such that \[\Ex{f_n} \le C_2\] for all $n$.
	\end{enumerate}
\end{lemma}
	From these claims we may deduce  Lemma~\ref{SeFrle:vobj} in an entirely analogous manner as we deduced Lemma~\ref{SeFrle:sobj} from Lemma~\ref{le:pre11}. We leave the details to the reader.
It remains to verify Lemma~\ref{le:reduct}.
\begin{proof}[Proof of Lemma~\ref{le:reduct}]
We start with claim 1). Using the Boltzmann-sampling methods from Section~\ref{sec:pobocyc}
, we obtain that the  probability generating function of $H_n$ is given by
	\begin{align}
	\label{eq:indetail}
	\Ex{w^{H_n}} = \frac{[z^{n-1}]\bar{Z}_{\Set^\circledast_{\Omega}}(\tilde{\cA}_{\Omega^*}(\rho z), \tilde{\cA}^\circ_{\Omega^*}(\rho z); \tilde{\cA}_{\Omega^*}((\rho wz)^2),\tilde{\cA}^\circ_{\Omega^*}((\rho wz)^2); \ldots)}{[z^{n-1}]\bar{Z}_{\Set^\circledast_{\Omega}}(\tilde{\cA}_{\Omega^*}(\rho z), \tilde{\cA}^\circ_{\Omega^*}(\rho z); \tilde{\cA}_{\Omega^*}((\rho z)^2), \tilde{\cA}^\circ_{\Omega^*}((\rho z)^2); \ldots)}.
	\end{align}
	A detailed justification of this fact goes as follows. By the product rule in Section~\ref{sec:pobocyc} it suffices to consider $(n-1)$-sized rooted symmetries of $\Set_{\Omega}^{\circledast} \circ \cA_{\Omega^*}$.  The composition rule states that to sample such a symmetry according  according to the Boltzmann model with parameters $(\rho^i, \rho^i)_{i \ge 1}$, we may start with a P\'olya--Boltzmann distributed rooted symmetry of $\Set_\Omega^{\circledcirc}$ with parameters $(\tilde{\cA}_{\Omega^*}(\rho^i), \tilde{\cA}^\circ_{\Omega^*}(\rho^i))_{i \ge 1}$. Then, for each $j \ge 1$ and each unmarked $j$-cycle  a  symmetry of $\cA_{\Omega^*}$ is sampled according to a P\'olya--Boltzmann distribution with parameters $(\rho^{ij})_i$, and for the marked cycle we let $s$ denote its length and draw a rooted symmetry of $\cA_{\Omega^*}$  according to a P\'olya--Boltzmann distribution with parameters $(\rho^{si}, \rho^{si})_{i \ge 1}$. Given a $k \in \Omega$ sized permutation $\nu$ with a marked cycle having length $\ell \ge 2$ and a distinguished atom of this cycle, the probability for the rooted symmetry of $\Set_{\Omega}^\circledcirc$ to assume this value is given by
	\begin{align}
		\label{eq:t1}
		\frac{ \tilde{\cA}_{\Omega^*}^\circ(\rho^\ell) \tilde{\cA}_{\Omega^*}(\rho^\ell)^{\nu_\ell -1} }{k! \bar{Z}_{\Set_\Omega^\circledcirc}(\tilde{\cA}_{\Omega^*}(\rho), \tilde{\cA}^\circ_{\Omega^*}(\rho); \tilde{\cA}_{\Omega^*}(\rho^2), \tilde{\cA}^\circ_{\Omega^*}(\rho^2); \ldots)} \prod_{\substack{1 \le i \le k \\ i \ne \ell}} \tilde{\cA}_{\Omega^*}(\rho^i)^{\nu_i}.
	\end{align}
	Conditioned on this event, the probability generating function for the size of the resulting object is given by
	\begin{align}
		\label{eq:t2}
		\frac{ \tilde{\cA}^\circ_{\Omega^*}( (\rho z)^\ell)}{\tilde{\cA}^\circ_{\Omega^*}(\rho^\ell)} \prod_{\substack{1 \le i \le k \\ i \ne \ell}}  \left( \frac{ \tilde{\cA}_{\Omega^*}( (\rho z)^i)}{\tilde{\cA}_{\Omega^*}(\rho^i)} \right)^{\nu_i - \one_{i = \ell}}.
	\end{align}
	The exponents $(\rho z)^i$ are due to the fact that for each object corresponding to an $i$-cycle we attach $i$~identical copies, and likewise for the marked cycle. In order to keep track of the volume of the trees corresponding to cycles with length at least $2$ we may form the bivariate probability generating function where the variable $w$ corresponds to this parameter and $z$ to the total size, given by
	\begin{align}
		\label{eq:t3}
	\frac{ \tilde{\cA}^\circ_{\Omega^*}( (\rho wz)^\ell)}{\tilde{\cA}^\circ_{\Omega^*}(\rho^\ell)} \left( \frac{ \tilde{\cA}_{\Omega^*}( \rho z)}{\tilde{\cA}_{\Omega^*}(\rho)} \right)^{\nu_1} \prod_{\substack{2 \le i \le k \\ i \ne \ell}}  \left( \frac{ \tilde{\cA}_{\Omega^*}( (\rho wz)^i)}{\tilde{\cA}_{\Omega^*}(\rho^i)} \right)^{\nu_i - \one_{i = \ell}}.
	\end{align}
	Multiplying~\eqref{eq:t1} with \eqref{eq:t3} and summing over all outcomes with total size $n-1$ yields
	\begin{align}
		\label{eq:s1}
		 \frac{[z^{n-1}]\bar{Z}_{\Set^\circledast_{\Omega}}(\tilde{\cA}_{\Omega^*}(\rho z), \tilde{\cA}^\circ_{\Omega^*}(\rho z); \tilde{\cA}_{\Omega^*}((\rho wz)^2),\tilde{\cA}^\circ_{\Omega^*}((\rho wz)^2); \ldots)}{\bar{Z}_{\Set_\Omega^\circledcirc}(\tilde{\cA}_{\Omega^*}(\rho), \tilde{\cA}^\circ_{\Omega^*}(\rho); \tilde{\cA}_{\Omega^*}(\rho^2), \tilde{\cA}^\circ_{\Omega^*}(\rho^2); \ldots)}.
	\end{align}
	Multiplying~\eqref{eq:t1} with \eqref{eq:t2} and summing over all outcomes with total size $n-1$ yields
	\begin{align}
		\label{eq:s2}
	\frac{[z^{n-1}]\bar{Z}_{\Set^\circledast_{\Omega}}(\tilde{\cA}_{\Omega^*}(\rho z), \tilde{\cA}^\circ_{\Omega^*}(\rho z); \tilde{\cA}_{\Omega^*}((\rho z)^2),\tilde{\cA}^\circ_{\Omega^*}((\rho z)^2); \ldots)}{\bar{Z}_{\Set_\Omega^\circledcirc}(\tilde{\cA}_{\Omega^*}(\rho), \tilde{\cA}^\circ_{\Omega^*}(\rho); \tilde{\cA}_{\Omega^*}(\rho^2), \tilde{\cA}^\circ_{\Omega^*}(\rho^2); \ldots)}.
	\end{align}
	The quotient of \eqref{eq:s1} and \eqref{eq:s2} is the probability generating function for the parameter $H_n$, and the expression obtained in this way agrees with Equation~\eqref{eq:indetail}.

	Having verified Equation~\eqref{eq:indetail}, we proceed with the argument.
	Since $1 \in \Omega$ and there is a number $k \ge 3$ with $k \in \Omega$ it follows that the denominator in \eqref{eq:indetail} is bounded from below by \[[z^{n-1}] z^{k-1} \tilde{\cA}_{\Omega^*}(\rho z) = [z^{n-k}] \tilde{\cA}_{\Omega^*}(\rho z).\]
	We have that \[n - k \equiv 1 \mod \gcd(\Omega^*)\] and thus, by Proposition~\ref{SeFrpro:cnt}, we have that \[[z^{n-k}] \tilde{\cA}_{\Omega^*}(\rho z) \sim C n^{-3/2}\] as $n \equiv 2 \mod \gcd(\Omega^*)$ tends to infinity. The polynomial in the numerator with indeterminate $w$ is bounded coefficient wise by the series \[\bar{Z}_{\Set^\circledast_{\Omega}}(\tilde{\cA}_{\Omega^*}(\rho ), \tilde{\cA}^\circ_{\Omega^*}(\rho ); \tilde{\cA}_{\Omega^*}((\rho w)^2),\tilde{\cA}^\circ_{\Omega^*}((\rho w)^2); \ldots)\] which does not depend on $n$ and, by Proposition~\ref{SeFrpro:ser3}, has radius of convergence strictly larger than $1$.
	It follows that there is a constant $C'$ such that \[\Pr{H_n \ge x} \le C' n^{3/2} \gamma^x\] for all $n$ and $x$. By a similar argument as for Equation~\eqref{eq:indetail} the probability generating function for the random number number $f_n$ is given by
	\[
	\Ex{w^{f_n}} = \frac{[z^{n-1}]\bar{Z}_{\Set^\circledast_{\Omega}}(w \tilde{\cA}_{\Omega^*}(\rho z), w \tilde{\cA}^\circ_{\Omega^*}(\rho z); \tilde{\cA}_{\Omega^*}((\rho z)^2),\tilde{\cA}^\circ_{\Omega^*}((\rho z)^2); \ldots)}{[z^{n-1}]\bar{Z}_{\Set^\circledast_{\Omega}}(\tilde{\cA}_{\Omega^*}(\rho z), \tilde{\cA}^\circ_{\Omega^*}(\rho z); \tilde{\cA}_{\Omega^*}((\rho z)^2), \tilde{\cA}^\circ_{\Omega^*}((\rho z)^2); \ldots)}.
	\]
	The corresponding bound for the event $f_n \ge x$ follows by the same arguments as for $H_n$. This proves claim~1). Claims 2) and 3) follow by analogous arguments as in the proofs of claims 2) and 3) in Lemma~\ref{le:pre11}. 
\end{proof}

\bibliographystyle{abbrv}
\bibliography{biblio}

\end{document}